\tikzset{x=1cm,y=1cm,z=1cm}
\pgfplotsset{compat=1.16}
\title{Tropical modeling of battery swapping and charging station\thanks{Mathematics 12 (5), 644 (2024)}}
\author{N. Krivulin\thanks{Faculty of Mathematics and Mechanics, St.~Petersburg State University, 28 Universitetsky Ave., St.~Petersburg, 198504, Russia; 
nkk@math.spbu.ru.}
\and
A. Garg\thanks{Department of Industrial and Manufacturing Systems Engineering, School of Mechanical Science and Engineering, Huazhong University of Science and Technology, Wuhan, 430074, PR China; akhilgarg@hust.edu.cn}
}
\date{}
\newtheorem{theorem}{theorem}
\newtheorem{lemma}[theorem]{lemma}
\newtheorem{proposition}[theorem]{proposition}
\theoremstyle{definition}
\begin{document}

\maketitle

\begin{abstract}
We propose and investigate a queueing model of a battery swapping and charging station (BSCS) for electric vehicles (EVs). A new approach to the analysis of the queueing model is developed, which combines the representation of the model as a stochastic dynamic system with the use of the methods and results of tropical algebra, which deals with the theory and applications of algebraic systems with idempotent operations. We describe the dynamics of the queueing model by a system of recurrence equations that involve random variables (RVs) to represent the interarrival time of incoming EVs. A performance measure for the model is defined as the mean operation cycle time of the station. Furthermore, the system of equations is represented in terms of the tropical algebra in vector form as an implicit linear state dynamic equation. The performance measure takes on the meaning of the mean growth rate of the state vector (the Lyapunov exponent) of the dynamic system. By applying a solution technique of vector equations in tropical algebra, the implicit equation is transformed into an explicit one with a state transition matrix with random entries. The evaluation of the Lyapunov exponent reduces to finding the limit of the expected value of norms of tropical matrix products. This limit is then obtained using results from the tropical spectral theory of deterministic and random matrices. With this approach, we derive a new exact formula for the mean cycle time of the BSCS, which is given in terms of the expected value of the RVs involved. We present the results of the Monte Carlo simulation of the BSCS's operation, which show a good agreement with the exact solution. The application of the obtained solution to evaluate the performance of one BSCS and to find the optimal distribution of battery packs between stations in a network of BSCSs is discussed. The solution may be of interest in the case when the details of the underlying probability distributions are difficult to determine and, thus, serves to complement and supplement other modeling techniques with the need to fix a distribution.
\\

\textbf{Key-Words:} max-plus algebra, recurrence equation, stochastic dynamic system, Lyapunov exponent, electric vehicle, battery swapping and charging station.
\\

\textbf{MSC (2020):} 15A80, 60K30, 90B22
\end{abstract}

\section{Introduction}

The usage of electric vehicles (EVs) has seen a great rise on a large scale in recent years \cite{Chau2014Pure,Lebrouhi2021Key}. However, the adoption of electric vehicles is limited by problems such as the slow charging of battery packs (BPs) and the accelerated aging of BPs during fast charging (see, e.g., \cite{Hemavathi2022Study,Cui2023Analysis} for overviews of related problems and solution trends). Since batteries are the main source of power for EVs, ensuring energy supply is an important way to improve users' experience. At present, the power supply method for EVs is divided into two types: plug-in charging and battery swaps \cite{Ji2018Plugin,Hemavathi2022Study}. Plug-in charging has disadvantages such as a long charging time, fast charging shortening the service life of the battery, and the parking lots required for charging taking up a larger space. In addition, if the daily load of residents and the peak of EVs' charging load are in the same time period, this will lead to a ``peak--add--peak'' state, which will affect the normal operation of the power grid. On the contrary, the battery swapping scenario addresses these problems well. Battery swapping for EVs can decrease user waiting time, reduce purchase cost, and improve batteries' useful life. Therefore, many companies have adopted the battery swapping scenario for EVs. However, there are still some challenges in the promotion of the battery swapping scenario for EVs, such as the operating cost of the battery swapping and charging station (BSCS) and the centralized battery charging load.

Modern research on the implementation of battery swapping offers a range of models to study various aspects of the BSCS's operations, including the battery logistics and transportation strategy as well as energy management and operation scheduling in EVs' battery swapping and charging systems and networks. A queueing model of BSCSs with Poisson arrival and constant service times was proposed by Choi and Lim in \cite{Choi2020Analysis} to develop and analyze queue-length-dependent overload control policies. The queue distributions under different policies are derived using an embedded Markov chain, and system performance measures such as blocking probability and mean waiting time are examined by numerical examples.

In \cite{Asadi2022Monotone}, the operation of a BSCS was represented using a finite-horizon Markov decision process model combined with a dynamic programming algorithm, which allows determining the number of BPs to recharge, discharge, and replace over time. In \cite{You2020Scheduling}, an optimal scheduling problem was examined, which assigns the best BSCS to each EV, based on their current location and battery charge level. A scheduling strategy of the optimal transportation of BPs from a charging station to a swapping station was developed in \cite{Li2022Battery}. The strategy involves an optimization problem, which is solved using a genetic algorithm. This strategy was compared with two simple strategies by using the Monte Carlo simulation of battery swapping demand.

Given the cost and efficiency of operating battery replacement and charging stations, many scientists have proposed various strategies. Kang et al. \cite{Kang2016Centralized} proposed a new strategy for the centralized charging of EVs within the framework of battery replacement scenarios. This strategy considers optimal charging priorities and charging locations and, ultimately, minimizes the total charging cost based on electricity spot prices. A battery-planning strategy based on a partitioned battery management method was developed by \linebreak Yang et al. \cite{Yang2019Optimal}. In order to maximize profits, an optimization objective function was set, including the number of batteries in each segment. San et al. \cite{Sun2018Optimal} obtained the optimal charging strategy for a single battery replacement and charging station in order to minimize the cost of charging it. The optimization model was transformed into a Markov decision process with constraints, and the optimal strategy was obtained using the Lagrangian method and dynamic programming. Liu et al. \cite{Liu2019Distributed} demonstrated the method to optimize the charging and logistics of discharged and fully charged batteries to maximize the profits of battery replacement and charging stations. The problem of the optimal location of battery swapping stations (BSSs) was solved in \cite{Maghfiroh2023Location} by using a fuzzy multi-criteria decision-making~approach.

Wang et al. \cite{Wang2019Qos} addressed the problem of the online management of BSSs to minimize energy costs and ensure service quality. At the same time, the problem of designing optimal autonomous battery-swapping stations was studied to determine the optimal number of batteries and to achieve the ideal compromise between charging flexibility and battery cost distribution. Zhang et al. \cite{Zhang2019Demand} confirmed that the peak demand for battery replacement services can be effectively reduced, and the use of optimization strategies can reduce the total cost by approximately 12\%. The above research mainly considered the impact of factors such as the number of batteries, the battery charging costs, logistics planning, electricity prices during use, and the profits of battery replacement stations on the cost of battery replacement and charging stations to reduce operating costs. It suggested using different strategies to replace batteries and charging stations to varying degrees. However, the above view does not take into account the relationship between the load generated by large-scale centralized charging and the cost of replacing batteries and operating a charging station.

Huang et al. \cite{Huang2017Multi} found that the correspondence between the stochastic supply of wind and the need to charge electric vehicles can reduce the demand for conventional energy and carbon dioxide emissions. Xing et al. \cite{Xing2019Charging} proposed a data-based method for predicting electric vehicle charging demand based on online travel data. The predictive model provides recommendations for chargers and charging management. Battery charging load research mainly includes load prediction and the impact of reducing battery charging load on the power grid. The impact of reducing battery charging load is mainly focused on charging methods and increasing the overload capacity of the power grid. 

However, there are few studies on the logistics and transportation of batteries at swapping and charging stations. Furthermore, current research does not propose specific transportation strategies to reduce the load generated by large-scale charging. The operating costs of battery replacement and charging stations and large-scale battery charging loads are the main areas for the implementation of battery replacement scenarios \cite{Wang2014Centralized,Yang2019Decentralized}.

With the aim to contribute to the study of the BSCS's operations in this paper, the authors propose a framework based on tropical algebra to represent and analyze stochastic models of BSCSs. Tropical (idempotent) algebra deals with the theory and applications of algebraic systems with idempotent operations \cite{Baccelli1993Synchronization,Kolokoltsov1997Idempotent,Golan2003Semirings,Heidergott2006Maxplus,Gondran2008Graphs,Krivulin2009Methods,Butkovic2010Maxlinear}. A typical example of these systems is max-plus algebra, which is a semifield with addition defined as the operation of the maximum and multiplication as the arithmetic addition. One of the advantages of tropical algebra is that many problems, which are not linear in the ordinary sense, can turn into linear ones in the tropical algebra setting. The models and methods of tropical algebra find applications in various research domains such as location analysis, project scheduling, and decision-making. The application area includes stochastic dynamic systems, where tropical algebra serves as a useful tool to represent and analyze stochastic systems governed by tropical linear dynamic equations \cite{Heidergott2006Maxplus,Heidergott2006Maxplus-linear,Krivulin2009Methods}.

In this paper, we propose and investigate a queueing model of BSCSs for EVs. A new approach to the analysis of the queueing model is developed, which combines the representation of the model as a stochastic dynamic system with the application of the methods and results of tropical algebra. We describe the dynamics of the queueing model by a system of recurrence equations that involve random variables (RVs) to represent the interarrival time of incoming EVs. A performance measure for the model is defined as the mean operation cycle time of the station. Furthermore, the system of equations is represented in terms of the max-plus algebra in vector form as an implicit linear state dynamic equation. The performance measure takes on the meaning of the mean growth rate of the state vector (the Lyapunov exponent) of the dynamic system. By applying a solution technique of vector equations in tropical algebra, the implicit equation is transformed into an explicit one with a state transition matrix with random entries. The evaluation of the Lyapunov exponent reduces to finding the limit of the expected value of norms of tropical matrix products. This limit is then obtained using results from the tropical spectral theory of deterministic and random matrices. With this approach, we derive a new exact formula for the mean cycle time of the BSCS, which is given in terms of the expected value of the RVs involved. The numerical results of the Monte Carlo simulation of the BSCS's operation is demonstrated, which show a good agreement with the exact solution. We discuss the applications of the obtained solution to evaluate the performance of one BSCS and to find the optimal distribution of BPs between stations in a network of BSCSs.

As the obtained results show, the proposed approach based on the methods and results of tropical algebra presents a useful tool to model and analyze some classes of queueing models including the model of the BSCS under study. The approach can offer solutions, which are given in terms of the expected values of the RVs involved and independent of the particular probability distribution of the RVs. Such solutions may be of interest in the case when the details of the underlying probability distributions are difficult to determine and, thus, serve to supplement other modeling techniques with the need to fix a distribution.  

The rest of the paper is organized as follows. In Section~\ref{S-BSCSM}, we describe a queueing model of the BSCS that serves to both motivate and illustrate the study. Section~\ref{S-ETA} provides an overview of key definitions and the notation, and presents the preliminary results of tropical algebra, which are used in subsequent sections to examine the model under consideration. A stochastic dynamic model defined in the tropical algebra setting is described, and some related results are discussed in Section~\ref{S-SDS}. An application of the tropical-algebra-based approach to the analysis of the BSCS model is demonstrated in Section~\ref{S-ABSCSM}, which includes a simple formula for calculating the mean cycle time of the BSCS and related simulation results. In Section~\ref{S-EAP}, an example of the application of the obtained results to the optimal distribution of BPs between BSCSs is illustrated. Section~\ref{S-S} offers some concluding remarks.

\section{Battery Swapping and Charging Station Model}
\label{S-BSCSM}

We considered a BSCS that serves incoming requests of EVs to swap a depleted (discharged) BP to a fully charged one. Each EV is assumed equipped with one BP, and all BPs are considered of the same type (identical). The BSCS consists of a battery swapping and battery charging/storage areas.

The station has a set of identical BPs located in the storage area where they are charging and, then, waiting for use in swapping. All BPs can be charged simultaneously, and the charging of every BPs takes the same time. The swapping operations are performed one at a time and require equal time for all EVs.

The EVs arrive at the BSCS at random with time intervals distributed according to some probability law. Upon arrival, an EV waits until the following conditions hold: (i) a fully charged BP is available, and (ii) the swapping of the BP for the previous EV is completed, or the swapping procedure is immediately started if a fully charged BP and the swapping unit are both available.

A graphical representation of a BSCS as a queueing model is given in Figure~\ref{F-QMBSCS}. The model consists of (i) a single-server queue, which represents an arrival source of EVs, (ii) a single-server fork--join queue, which represents the swapping of batteries, and (iii) a multi-server queue, which represents the charging of the batteries. All queues have infinite buffers. At the initial time, the first queue is assumed to have an infinite number of jobs (EVs), the second queue has no job (the EV and BP ready for swapping), and the third queue has $m$ jobs (BPs ready for charging). 
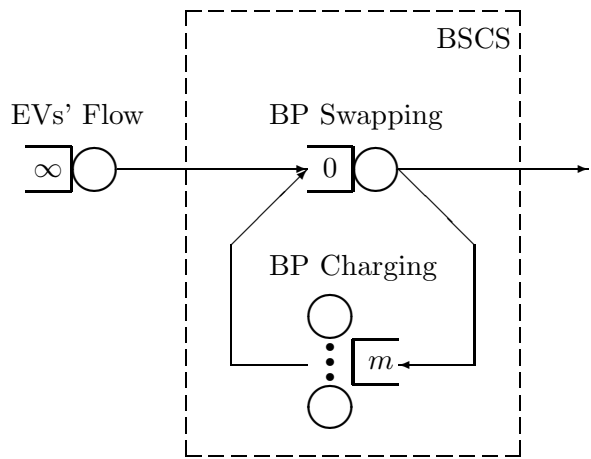
\begin{figure}[ht]
\setlength{\unitlength}{1.0mm}
\begin{center}
\begin{picture}(75,60)

\newsavebox\queue
\savebox{\queue}(10,6){\thicklines
 \put(0,3){\line(1,0){6}}
 \put(0,-3){\line(1,0){6}}
 \put(6,3){\line(0,-1){6}}
 \put(9,0){\circle{6}}}

\newsavebox\lqueue
\savebox{\lqueue}(10,6){\thicklines
 \put(3,0){\circle{6}}
 \put(6,3){\line(1,0){6}}
 \put(6,-3){\line(1,0){6}}
 \put(6,3){\line(0,-1){6}}}

\newsavebox\lqueues
\savebox{\lqueues}(10,18){\thicklines
 \put(3,-6){\circle{6}}

 \multiput(3,-2)(0,2){3}{\circle*{1}}

 \put(3,6){\circle{6}}

 \put(6,3){\line(1,0){6}}
 \put(6,-3){\line(1,0){6}}
 \put(6,3){\line(0,-1){6}}}

\newsavebox\queuef
\savebox{\queuef}(10,6){\thicklines
 \put(0,3){\line(1,0){6}}
 \put(0,-3){\line(1,0){6}}
 \put(0,3){\line(0,-1){6}}
 \put(6,3){\line(0,-1){6}}
 \put(9,0){\circle{6}}}

\newsavebox\lqueuef
\savebox{\lqueuef}(10,6){\thicklines
 \put(3,0){\circle{6}}
 \put(6,3){\line(1,0){6}}
 \put(6,-3){\line(1,0){6}}
 \put(6,3){\line(0,-1){6}}
 \put(12,3){\line(0,-1){6}}}

\put(3,35){\usebox\queue}
\put(15,38){\vector(1,0){25}}
\put(4,37){$\infty$}
\put(1,44){EVs' Flow}

\put(40,35){\usebox\queue}
\put(52,38){\vector(1,0){25}}
\put(42,37){$0$}
\put(35,44){BP Swapping}

\put(52,38){\line(1,-1){10}}
\put(62,28){\line(0,-1){16}}
\put(62,12){\vector(-1,0){10}}
\put(40,2){\usebox\lqueues}
\put(40,12){\line(-1,0){10}}
\put(30,12){\line(0,1){16}}
\put(30,28){\vector(1,1){10}}
\put(48,11.5){$m$}
\put(35,24){BP Charging}

\multiput(24,0)(3,0){15}{\line(1,0){2}}
\multiput(24,0)(0,3){20}{\line(0,1){2}}

\multiput(24,59)(3,0){15}{\line(1,0){2}}
\multiput(68,0)(0,3){20}{\line(0,1){2}}

\put(57,54){BSCS}

\end{picture}
\end{center}
\caption{Queueing model of battery swapping and charging station.}\label{F-QMBSCS}
\end{figure}

\subsection{System of Recurrence Equations}

Suppose that the station has $m$ BPs intended for swapping, and define the following state variables. For $k=1,2,\ldots$, let $x(k)$ be the arrival epoch of the $k$th incoming EV, $y(k)$ be the completion time of the battery swapping for the $k$th EV, and $z(k)$ be the time when a fully charged BP is available for the $k$th EV. We also assumed that $x(k)=y(k)=z(k)=0$ for all $k\leq0$.

We now describe the evolution of the system as a set of recurrence equations. We denote the time interval between the $(k-1)$the and $k$th arrival epochs by $\alpha_{k}$ and assume $\{\alpha_{k}|\ k=1,2,\ldots\}$ to be a sequence of independent and identically distributed positive (nonnegative) RVs with a finite expected value $\mathsf{E}\alpha_{1}=a\geq0$ and variance $\mathsf{D}\alpha_{1}$. With this notation, we can represent the $k$th arrival epoch as
\begin{equation*}
x(k)
=
x(k-1)+\alpha_{k}.
\end{equation*}

Furthermore, we denote by $b>0$ the swapping time of one BP. Observing that the $k$th swapping operation starts as soon as the following events occur: (i) the $k$th EV arrives, (ii)~the $(k-1)$the swapping operation completes, and (iii) a fully charged BP becomes available for the $k$th time, we write the equation:
\begin{equation*}
y(k)
=
\max(x(k),y(k-1),z(k))+b.
\end{equation*}

Finally, we assumed that all $m$ BPs available in the station are discharged at the initial time epoch $k=0$. With the charging time of one BP denoted by $c>0$, we have
\begin{equation*}
z(k)
=
y(k-m)+c.
\end{equation*}

We now substitute $z(k)$ from the last equation into the second and, then, combine the first and second equations into the dynamic system in two state variables:
\begin{equation}
\begin{aligned}
x(k)
&=
x(k-1)+\alpha_{k},
\\
y(k)
&=
\max(x(k),y(k-1),y(k-m)+c)+b.
\end{aligned}
\label{E-xkeqxk1alphak-ykeqmaxxkyk1cb}
\end{equation}

\subsection{Performance Measure}

We define the operation cycle of the BSCS model described by \eqref{E-xkeqxk1alphak-ykeqmaxxkyk1cb} as the interval between successive completions of swapping operations. Furthermore, we consider the mean (average) cycle time over the first $k$ cycles:
\begin{equation*}
\frac{1}{k}\sum_{i=1}^{k}(y(i)-y(i-1))
=
\frac{1}{k}y(k).
\end{equation*}

We turn to the limit when $k$ tends to $\infty$ and assume that this limit exists (deterministically or with probability one) to write
\begin{equation}
\lim_{k\to\infty}
\frac{1}{k}y(k)
=
\lambda.
\label{E-lambda}
\end{equation}

The constant $\lambda$ is referred to as the mean cycle time and may serve as a useful characteristic of the system. Specifically, for a large time horizon $T$, the ratio $T/\lambda$ differs little from the mean number of battery swaps in the time interval from $0$ to $T$. Since this ratio also shows the mean number of batteries swapped, it can be used to estimate other characteristics such as the mean total energy consumption for battery charging (which is considered proportional to the mean total charging time estimated as $cT/\lambda$, where $c$ denotes the energy consumption per BP) or the mean total revenue received from customers (proportional to $T/\lambda$).

The evaluation of the mean cycle time directly from recurrent Equation \eqref{E-xkeqxk1alphak-ykeqmaxxkyk1cb}, which involves the operation of the maximum, may be a rather difficult problem even if the equations are in a simple form like those presented above. At the same time, this form of the recurrence equations offers a potential for the use of models and methods of tropical algebra, which allows one to handle such equations in a unified analytical framework. In the subsequent sections, we show how to represent the equations in terms of tropical algebra in vector form and, then, use this representation to evaluate the mean cycle time analytically.

\section{Elements of Tropical Algebra}
\label{S-ETA}

We begin with preliminary definitions and results of tropical algebra, which are used for the representation and analysis of the dynamic model in what follows. Tropical (idempotent) algebra deals with the theory and applications of algebraic systems with idempotent operations, which are studied in many works, including the monographs \cite{Baccelli1993Synchronization,Kolokoltsov1997Idempotent,Golan2003Semirings,Heidergott2006Maxplus,Gondran2008Graphs,Krivulin2009Methods,Butkovic2010Maxlinear}. 

\subsection{Idempotent Semifield}
Let $\mathbb{X}$ be a set that is closed under associative and commutative binary operations: addition $\oplus$ and multiplication $\otimes$, and includes their neutral elements: zero $\mathbb{0}$ and identity $\mathbb{1}$. Addition is idempotent: $x\oplus x=x$ for all $x\in\mathbb{X}$. Multiplication distributes over addition and is invertible: for each $x\ne\mathbb{0}$, there is an inverse $x^{-1}$ such that $x\otimes x^{-1}=\mathbb{1}$ (hereafter, the multiplication sign $\otimes$ is omitted to save writing). 

The power notation with integer exponents specifies iterated products: $x^{p}=x^{p-1}x$, $x^{-p}=(x^{p})^{-1}$, $\mathbb{0}^{p}=\mathbb{1}$, and $x^{0}=\mathbb{1}$ for $x\ne\mathbb{0}$ and integer $p>0$. The powers with rational exponents are also assumed well-defined. The binomial identity takes the form of the equality $(x\oplus y)^{q}=x^{q}\oplus y^{q}$, which is valid for any rational $q\geq0$.

The set $\mathbb{X}$ is assumed to be totally ordered by an order relation consistent with that induced by idempotent addition by the rule: $x\leq y$ if and only if $x\oplus y=y$. With respect to this order, addition and multiplication are monotone in each argument: if the inequality $x\leq y$ holds, then $x\oplus z\leq y\oplus z$ and $xz\leq yz$ for any $z$. For nonzero $x$ and $y$ such that $x\leq y$ and rational $q$, the inequality $x^{q}\leq y^{q}$  holds if $q\geq0$ and $x^{q}\geq y^{q}$ if $q<0$. Furthermore, the inequalities $x\leq x\oplus y$ and $y\leq x\oplus y$ are valid for all $x$ and $y$. Finally, the inequality $x\oplus y\leq z$ is equivalent to the system of inequalities $x\leq z$ and $y\leq z$.

The algebraic system $(\mathbb{X},\oplus,\otimes,\mathbb{0},\mathbb{1})$ is usually referred to as the idempotent semifield.

A typical example of the system is the real semifield $(\mathbb{R}\cup\{-\infty\},\max,+,-\infty,0)$, also known as max-plus algebra. In max-plus algebra, the operations are defined as $\oplus=\max$ and $\otimes=+$ and the neutral elements as $\mathbb{0}=-\infty$ and $\mathbb{1}=0$. For any $x\in\mathbb{R}$, the multiplicative inverse $x^{-1}$ is equal to the opposite number $-x$ in the standard arithmetics. The power $x^{y}$ coincides with the usual arithmetic product $x\times y$. The order relation $\leq$ corresponds to the natural linear order on $\mathbb{R}$.

\subsection{Algebra of Matrices and Vectors}
The scalar operations $\oplus$ and $\otimes$ are extended to vectors and matrices over $\mathbb{X}$ in the usual way. A matrix with all entries equal to $\mathbb{0}$ is the zero matrix denoted $\bm{0}$. For any matrices $\bm{A}=(a_{ij})$, $\bm{B}=(b_{ij})$ and $\bm{C}=(c_{ij})$ of appropriate sizes, and scalar $x$, matrix addition, matrix multiplication, and scalar multiplication are defined by componentwise formulas:
\begin{equation*}
(\bm{A}\oplus\bm{B})_{ij}
=
a_{ij}\oplus b_{ij},
\qquad
(\bm{A}\bm{C})_{ij}
=
\bigoplus_{k}a_{ik}c_{kj},
\qquad
(x\bm{A})_{ij}
=
xa_{ij}.
\end{equation*}

For any nonzero $(m\times n)$-matrix $\bm{A}=(a_{ij})$, its multiplicative conjugate is the $(n\times m)$-matrix $\bm{A}^{-}=(a_{ij}^{-})$ with the entries $a_{ij}^{-}=a_{ji}^{-1}$ if $a_{ji}\ne\mathbb{0}$, and $a_{ij}^{-}=\mathbb{0}$ otherwise.

The monotonicity of scalar addition and multiplication, as well as other properties that involve the order relations are extended to the operations on matrices, where the inequalities are understood componentwise. 

A square matrix is diagonal if all its off-diagonal entries are equal to $\mathbb{0}$ and triangular if its entries either above or below the diagonal are equal to $\mathbb{0}$. A triangular matrix with all diagonal entries equal to $\mathbb{0}$ is called strictly triangular. The block diagonal and (strictly) block triangular matrices are introduced in a similar way.

A diagonal matrix that has all diagonal entries equal to $\mathbb{1}$ is the identity matrix denoted by $\bm{I}$. The power notation for matrices is defined in the sense of tropical algebra as follows: $\bm{A}^{0}=\bm{I}$, $\bm{A}^{p}=\bm{A}\bm{A}^{p-1}$ for any square matrix $\bm{A}$ and integer $p>0$.

The trace of a square matrix $\bm{A}=(a_{ij})$ of order $n$ is given by
\begin{equation*}
\mathop\mathrm{tr}\bm{A}
=
a_{11}\oplus\cdots\oplus a_{nn}
=
\bigoplus_{i=1}^{n}a_{ii}.
\end{equation*}
 
A tropical analogue of the matrix determinant is defined as
\begin{equation*}
\mathop\mathrm{Tr}(\bm{A})
=
\mathop\mathrm{tr}\bm{A}\oplus\cdots\oplus\mathop\mathrm{tr}\bm{A}^{n}
=
\bigoplus_{m=1}^{n}\mathop\mathrm{tr}\bm{A}^{m}.
\end{equation*}

If the condition $\mathop\mathrm{Tr}(\bm{A})\leq\mathbb{1}$ holds, the Kleene star matrix is calculated as
\begin{equation*}
\bm{A}^{\ast}
=
\bm{I}\oplus\bm{A}\oplus\cdots\oplus\bm{A}^{n-1}
=
\bigoplus_{m=0}^{n-1}\bm{A}^{m}.
\end{equation*}

A matrix that consists of one column (row) is a column (row) vector. All vectors are assumed column vectors unless transposed. A vector with all entries equal to $\mathbb{0}$ is the zero vector denoted $\bm{0}$. Any vector that has no zero entries is called regular. The vector that has all entries equal to $\mathbb{1}$ is denoted by $\bm{1}=(\mathbb{1},\ldots,\mathbb{1})^{T}$. In max-plus algebra, where $\mathbb{1}=0$, the vector $\bm{1}$ has all entries equal to arithmetic zero 
 (the usual zero vector).

For any matrix $\bm{A}=(a_{ij})$ and vector $\bm{x}=(x_{i})$, tropical norms are given by
\begin{equation*} 
\|\bm{A}\|
=
\bm{1}^{T}\bm{A}\bm{1}
=
\bigoplus_{i,j}a_{ij},
\qquad
\|\bm{x}\|
=
\bm{1}^{T}\bm{x}
=
\bm{x}^{T}\bm{1}
=
\bigoplus_{i}x_{i},
\end{equation*} 
which coincide in max-plus algebra with the maximum entries of $\bm{A}$ and $\bm{x}$.

For any conforming matrices $\bm{A}$, $\bm{B}$ and $\bm{C}$, and scalar $x$, the following relations hold:
\begin{equation*}
\|\bm{A}\oplus\bm{B}\|
=
\|\bm{A}\|\oplus\|\bm{B}\|,
\qquad
\|\bm{A}\bm{C}\|
\leq
\|\bm{A}\|\|\bm{C}\|,
\qquad
\|x\bm{A}\|
=
x\|\bm{A}\|.
\end{equation*}

A scalar $\lambda$ is an eigenvalue of an $(n\times n)$-matrix $\bm{A}$ if there exists an $n$-vector $\bm{x}\ne\bm{0}$ such that $\bm{A}\bm{x}=\lambda\bm{x}$. The spectral radius of $\bm{A}$ is the maximum eigenvalue, which is given by
\begin{equation*}
\rho(\bm{A})
=
\mathop\mathrm{tr}\bm{A}\oplus\cdots\oplus\mathop\mathrm{tr}\nolimits^{1/n}(\bm{A}^{n})
=
\bigoplus_{m=1}^{n}\mathop\mathrm{tr}\nolimits^{1/m}(\bm{A}^{m}).
\end{equation*}

Note that, if the spectral radius is defined in the framework of max-plus algebra, it can be represented using ordinary arithmetic operations as the maximum of the mean (average) cyclic sums of entries in $\bm{A}$ in the form:
\begin{equation}
\rho(\bm{A})
=
\max\left\{a_{11},\ldots,a_{nn},\frac{a_{12}+a_{21}}{2},\ldots,\frac{a_{n-1,n}+a_{n,n-1}}{2},\ldots\right\}.
\label{E-rhoA}
\end{equation}

If a matrix $\bm{A}$ has no entries equal to $\mathbb{0}$, then for all integers $k\geq0$, the following inequality holds (see, e.g., \cite{Krivulin2007Convergence}):
\begin{equation}
\|\bm{A}^{k}\|
\leq
\rho^{k}(\bm{A})\|\bm{A}\bm{A}^{-}\|.
\label{I-AklerhoA}
\end{equation}

The next theorem is a consequence of the results obtained in \cite{Vorobjev1963Extremal,Romanovskii1967Optimization} (see also \cite{Kolokoltsov1997Idempotent,Krivulin2007Convergence}).
\begin{theorem}
\label{T-limAkl}
For any $(n\times n)$-matrix $\bm{A}$, there exist the limits:
\begin{equation*}
\lim_{k\to\infty}\|\bm{A}^{k}\|^{1/k}
=
\rho(\bm{A}),
\qquad
\lim_{k\to\infty}\mathop\mathrm{tr}\nolimits^{1/k}(\bm{A}^{k})
=
\rho(\bm{A}).
\end{equation*}
\end{theorem}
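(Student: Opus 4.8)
The plan is to argue in the weighted-digraph picture underlying \eqref{E-rhoA}. I view $\bm{A}$ as the adjacency matrix of a digraph on $n$ nodes whose arc $i\to j$ has weight $a_{ij}$; then $(\bm{A}^{k})_{ij}$ is the $\oplus$-sum over all walks of length $k$ from $i$ to $j$ of the products of their arc weights, so $\mathop\mathrm{tr}\bm{A}^{k}$ is the $\oplus$-sum over closed walks of length $k$, $\|\bm{A}^{k}\|$ is the $\oplus$-sum over all walks of length $k$, and $\rho(\bm{A})$ is, by \eqref{E-rhoA}, the largest mean arc weight around an elementary cycle. All estimates below use only the semifield operations and the order, so the general semifield case is the same as the max-plus one. (If $\rho(\bm{A})=\mathbb{0}$ the support digraph is acyclic, $\bm{A}^{k}=\bm{0}$ for $k\geq n$, and both limits are trivially $\mathbb{0}=\rho(\bm{A})$; so assume $\rho(\bm{A})\neq\mathbb{0}$.)

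First I would prove the upper bounds. A closed walk of length $k$ decomposes into elementary cycles with total length $k$, each of mean weight at most $\rho(\bm{A})$, so its weight is at most $\rho^{k}(\bm{A})$; taking the $\oplus$-sum over all such walks gives $\mathop\mathrm{tr}\bm{A}^{k}\leq\rho^{k}(\bm{A})$, hence $\mathop\mathrm{tr}\nolimits^{1/k}(\bm{A}^{k})\leq\rho(\bm{A})$ for every $k$. For the norm I would rather invoke \eqref{I-AklerhoA}: if $\bm{A}$ has no $\mathbb{0}$ entries, $\|\bm{A}^{k}\|\leq\rho^{k}(\bm{A})\|\bm{A}\bm{A}^{-}\|$, so $\|\bm{A}^{k}\|^{1/k}\leq\rho(\bm{A})\|\bm{A}\bm{A}^{-}\|^{1/k}$, and since $\|\bm{A}\bm{A}^{-}\|$ is a fixed nonzero scalar, $\|\bm{A}\bm{A}^{-}\|^{1/k}\to\mathbb{1}$, whence $\limsup_{k}\|\bm{A}^{k}\|^{1/k}\leq\rho(\bm{A})$. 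For a general $\bm{A}$ one either restricts to the strongly connected component carrying the largest cycle mean, or bounds an arbitrary walk of length $k$ by a simple path of length $<n$ followed by cycles, again yielding $\|\bm{A}^{k}\|\leq\rho^{k}(\bm{A})\cdot C$ for a constant $C$.

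Next, the lower bounds. By $\rho(\bm{A})=\bigoplus_{m=1}^{n}\mathop\mathrm{tr}\nolimits^{1/m}(\bm{A}^{m})$ the maximum is attained at some $m\leq n$, so there is a closed walk $W$ of length $m$ with weight $\rho^{m}(\bm{A})$. Concatenating $j$ copies of $W$ produces a closed walk of length $jm$ and weight $\rho^{jm}(\bm{A})$, so $\mathop\mathrm{tr}\bm{A}^{jm}\geq\rho^{jm}(\bm{A})$, and since $\mathop\mathrm{tr}\bm{A}^{jm}\leq\|\bm{A}^{jm}\|$, both $\mathop\mathrm{tr}\nolimits^{1/(jm)}(\bm{A}^{jm})$ and $\|\bm{A}^{jm}\|^{1/(jm)}$ equal $\rho(\bm{A})$ for all $j$. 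For the first limit I would upgrade this subsequence statement to a genuine limit by padding: for arbitrary $k$ write $k=jm+r$ with $0\leq r<m$ and extend the $j$-fold copy of $W$ by any walk of length $r$ leaving the base node of $W$ (which exists, with weight bounded below independently of $k$, since $\bm{A}$ has no $\mathbb{0}$ entries); taking $k$-th roots, the factor contributed by $W$ tends to $\rho(\bm{A})$ because $jm/k\to1$ and the bounded remainder contributes a $k$-th root tending to $\mathbb{1}$. Hence $\liminf_{k}\|\bm{A}^{k}\|^{1/k}\geq\rho(\bm{A})$, which together with the upper bound proves the first limit.

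The main obstacle is the second limit. Here the padding trick fails as stated, because a closed walk of length exactly $k$ of weight close to $\rho^{k}(\bm{A})$ need not exist for every large $k$: it does when the subgraph spanned by the critical cycles is primitive, but if that subgraph has cyclicity $\sigma>1$ then $\mathop\mathrm{tr}\bm{A}^{k}$ can drop well below $\rho^{k}(\bm{A})$ (even to $\mathbb{0}$) for $k$ outside one residue class modulo $\sigma$. This is precisely where the structural results of \cite{Vorobjev1963Extremal,Romanovskii1967Optimization} on the ultimate periodicity of tropical matrix powers are needed: they yield $\bm{A}^{k+\sigma}=\rho^{\sigma}(\bm{A})\bm{A}^{k}$ on the critical part for all sufficiently large $k$, so $\mathop\mathrm{tr}\nolimits^{1/k}(\bm{A}^{k})\to\rho(\bm{A})$ along each residue class modulo $\sigma$ and therefore along all $k$. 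I would treat the reduction to these periodicity/cyclicity results, rather than the elementary estimates above, as the crux of the argument.
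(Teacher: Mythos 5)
The paper offers no proof of this theorem at all --- it is imported wholesale from \cite{Vorobjev1963Extremal,Romanovskii1967Optimization} --- so the only question is whether your argument is sound. Your treatment of the first limit is the standard one and is essentially correct: the upper bound by decomposing a walk into a short simple path plus elementary cycles of mean weight at most $\rho(\bm{A})$ (or via \eqref{I-AklerhoA} in the full-support case), and the lower bound by powering a critical cycle of length $m\leq n$ and padding $k=jm+r$. The one small slip there is the parenthetical appeal to ``$\bm{A}$ has no $\mathbb{0}$ entries'' for the padding walk: you do not need that hypothesis (and the theorem does not grant it), since you can pad by walking $r$ further steps along the critical cycle itself, whose arcs are non-$\mathbb{0}$ by definition and whose weights give a constant independent of $k$.

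The second limit is where the proposal breaks down, and you half-see this yourself. You correctly observe that when the critical graph has cyclicity $\sigma>1$ the trace $\mathop\mathrm{tr}\bm{A}^{k}$ can equal $\mathbb{0}$ for every $k$ outside one residue class modulo $\sigma$; but the concluding sentence --- that ultimate periodicity gives convergence ``along each residue class modulo $\sigma$ and therefore along all $k$'' --- contradicts that very observation. Along a residue class on which the trace is identically $\mathbb{0}$, the quantity $\mathop\mathrm{tr}^{1/k}(\bm{A}^{k})$ equals $\mathbb{0}=-\infty$ and cannot tend to $\rho(\bm{A})$. Concretely, for the $2\times 2$ max-plus matrix with $a_{12}=a_{21}=\mathbb{1}$ and $a_{11}=a_{22}=\mathbb{0}$ one has $\rho(\bm{A})=\mathbb{1}=0$ by \eqref{E-rhoA}, yet $\bm{A}^{k}=\bm{A}$ for all odd $k$, so $\mathop\mathrm{tr}\bm{A}^{k}=\mathbb{0}$ along the odd subsequence and the second limit fails as literally stated. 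No amount of appeal to the periodicity theorems closes this gap: the trace statement needs either an additional hypothesis (e.g., that some power of $\bm{A}$ has a non-$\mathbb{0}$ diagonal, as holds for the primitive matrix $\bm{D}$ used later in the paper) or a reinterpretation of the limit (as a $\limsup$, or along multiples of the cyclicity). You should flag this as a defect of the statement rather than assert a conclusion your own counterexample refutes; note that only the norm limit is actually invoked in the proof of Lemma~\ref{L-Lyapunov}, so the paper's downstream results are unaffected.
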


\subsection{Vector Equation and Matrix Inequality}
To conclude the overview of the preliminary results, we give a solution for a vector equation and derive inequalities for products of square matrices to be used in what follows.

Suppose that given an $(n\times n)$-matrix $\bm{A}$ and $n$-vector $\bm{b}$, the problem is to find regular $n$-vectors $\bm{x}$ that satisfy the equation:
\begin{equation}  
\bm{A}\bm{x}\oplus\bm{b}
=
\bm{x}.
\label{E-Axbeqx}
\end{equation}  

The following lemma offers a solution of the equation in a special case as a consequence of the results obtained in \cite{Krivulin2006Solution,Krivulin2009Methods}.
\begin{lemma}
\label{L-Axbeqx}
If $\mathop\mathrm{Tr}(\bm{A})<\mathbb{1}$, then Equation \eqref{E-Axbeqx} has the unique solution $\bm{x}=\bm{A}^{\ast}\bm{b}$.
\end{lemma}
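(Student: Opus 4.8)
The plan is to establish existence and uniqueness separately, both resting on the single structural fact that the strict condition $\mathrm{Tr}(\bm{A})<\mathbb{1}$ forces every cyclic weight in $\bm{A}$ to be at most $\mathbb{1}$, so that $\bm{A}^{l}\leq\bm{A}^{\ast}$ for every integer $l\geq 0$. I would prove this entrywise in the weighted-graph picture: an entry of $\bm{A}^{l}$ is a maximum over walks of length $l$, and if $l\geq n$ such a walk must, by the pigeonhole principle, revisit a vertex and hence contain a cycle; deleting the cycle does not decrease the weight (its weight is $\leq\mathbb{1}$) and shortens the walk, and iterating reduces it to a walk of length at most $n-1$, already accounted for in $\bm{A}^{\ast}=\bm{I}\oplus\bm{A}\oplus\cdots\oplus\bm{A}^{n-1}$. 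The diagonal case degenerates to a closed walk of weight $<\mathbb{1}=(\bm{A}^{\ast})_{ii}$, which is fine.

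For existence I would substitute $\bm{x}=\bm{A}^{\ast}\bm{b}$ into the left side of \eqref{E-Axbeqx} and compute, using the definition of $\bm{A}^{\ast}$, that $\bm{A}\bm{A}^{\ast}\bm{b}\oplus\bm{b}=(\bm{I}\oplus\bm{A}\oplus\cdots\oplus\bm{A}^{n})\bm{b}=(\bm{A}^{\ast}\oplus\bm{A}^{n})\bm{b}=\bm{A}^{\ast}\bm{b}$, where the last step uses $\bm{A}^{n}\leq\bm{A}^{\ast}$. Regularity of this solution is immediate from $\bm{A}^{\ast}\geq\bm{I}$, which gives $\bm{A}^{\ast}\bm{b}\geq\bm{b}$, so it is regular whenever $\bm{b}$ is.

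For uniqueness I would take an arbitrary regular solution $\bm{x}$ and iterate \eqref{E-Axbeqx}: by induction $\bm{x}=\bm{A}^{k}\bm{x}\oplus(\bm{I}\oplus\bm{A}\oplus\cdots\oplus\bm{A}^{k-1})\bm{b}$ for every $k\geq 1$, and for $k\geq n$ the sum in parentheses collapses to $\bm{A}^{\ast}$ (again via $\bm{A}^{l}\leq\bm{A}^{\ast}$), so that $\bm{x}=\bm{A}^{k}\bm{x}\oplus\bm{A}^{\ast}\bm{b}$ for all $k\geq n$. This already yields the lower bound $\bm{x}\geq\bm{A}^{\ast}\bm{b}$ together with the estimate $\bm{x}\leq\bm{A}^{k}\bm{x}\oplus\bm{A}^{\ast}\bm{b}$. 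To close the gap I would show $\bm{A}^{k}\bm{x}\to\bm{0}$: the strict inequality $\mathrm{Tr}(\bm{A})<\mathbb{1}$ gives $\mathrm{tr}(\bm{A}^{m})<\mathbb{1}$ for each $m$, hence $\mathrm{tr}^{1/m}(\bm{A}^{m})<\mathbb{1}$ by monotonicity of rational powers, hence $\rho(\bm{A})<\mathbb{1}$; Theorem~\ref{T-limAkl} then gives $\|\bm{A}^{k}\|^{1/k}\to\rho(\bm{A})<\mathbb{1}$, so $\|\bm{A}^{k}\|\leq q^{k}\to\mathbb{0}$ for some $q<\mathbb{1}$ and all large $k$, and therefore $\|\bm{A}^{k}\bm{x}\|\leq\|\bm{A}^{k}\|\,\|\bm{x}\|\to\mathbb{0}$, i.e. $\bm{A}^{k}\bm{x}\to\bm{0}$ componentwise. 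Passing to the limit in $\bm{x}\leq\bm{A}^{k}\bm{x}\oplus\bm{A}^{\ast}\bm{b}$ yields $\bm{x}\leq\bm{A}^{\ast}\bm{b}$, which combined with the lower bound gives $\bm{x}=\bm{A}^{\ast}\bm{b}$.

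I expect the decay step to be the main obstacle: one must cleanly convert the strict trace condition into asymptotic vanishing of $\bm{A}^{k}$ through $\rho(\bm{A})<\mathbb{1}$ and Theorem~\ref{T-limAkl}, and be careful about taking limits of inequalities within the idempotent semifield (using that the order is preserved under limits). By comparison the two collapsing-sum identities and the iteration are routine. If one prefers to avoid limits, an alternative is to invoke the general parametric description of the solution set of \eqref{E-Axbeqx} under $\mathrm{Tr}(\bm{A})\leq\mathbb{1}$ from \cite{Krivulin2006Solution,Krivulin2009Methods} and note that the strict inequality kills the free-parameter term; but the self-contained argument above seems preferable here.
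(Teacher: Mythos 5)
Your proof is correct, and it is worth noting that the paper itself does not prove Lemma~\ref{L-Axbeqx}: the lemma is imported from \cite{Krivulin2006Solution,Krivulin2009Methods}, where the full solution set of \eqref{E-Axbeqx} is described under the weaker hypothesis $\mathop\mathrm{Tr}(\bm{A})\leq\mathbb{1}$ (a Kleene-star expression with a free parameter that the strict inequality eliminates) --- the very alternative you mention at the end. Your self-contained argument replaces that citation with the standard route: the walk-absorption fact $\bm{A}^{l}\leq\bm{A}^{\ast}$ for all $l\geq0$ (your graph argument is sound, since $\mathop\mathrm{Tr}(\bm{A})<\mathbb{1}$ bounds every closed walk of length at most $n$, hence every cycle, by $\mathbb{1}$), which yields existence by the substitution $\bm{A}\bm{A}^{\ast}\bm{b}\oplus\bm{b}=(\bm{A}^{\ast}\oplus\bm{A}^{n})\bm{b}=\bm{A}^{\ast}\bm{b}$; the iteration $\bm{x}=\bm{A}^{k}\bm{x}\oplus\bm{A}^{\ast}\bm{b}$ for $k\geq n$, which gives the lower bound; and the decay of $\bm{A}^{k}\bm{x}$, correctly derived from $\rho(\bm{A})<\mathbb{1}$ (each $\mathop\mathrm{tr}^{1/m}(\bm{A}^{m})<\mathbb{1}$ follows from $\mathop\mathrm{tr}(\bm{A}^{m})<\mathbb{1}$ by taking $m$-th roots) and Theorem~\ref{T-limAkl}, which gives the upper bound. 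Two small points you flagged yourself deserve to be made explicit: the decay step is an order-topological argument in the max-plus realization rather than a purely algebraic one (you need a $q$ with $\rho(\bm{A})<q<\mathbb{1}$, e.g.\ $q=\rho^{1/2}(\bm{A})$, and the fact that $q^{k}\to\mathbb{0}$), which is consistent with the paper's framework since Theorem~\ref{T-limAkl} is itself a limit statement; and the solution $\bm{A}^{\ast}\bm{b}$ is regular only when $\bm{b}$ is (or more generally when $\bm{A}^{\ast}\bm{b}$ happens to have no zero entries), a hypothesis the lemma leaves implicit --- your uniqueness argument in fact rules out any other nonzero solution, regular or not, which is slightly stronger than what is claimed. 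The payoff of your route is a short, verifiable proof using only tools already present in the paper; the payoff of the paper's citation is access to the complete parametric solution under the non-strict condition, which is not needed here.
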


We now turn to evaluating the lower and upper bounds for the norm of a product of matrices in block triangular form. Let $\bm{A}(i)$ for all $i=1,\ldots,k$ be conforming block triangular matrices given by the sum of block diagonal and strictly triangular matrices as follows:
\begin{equation}
\bm{A}(i)
=
\bm{D}(i)
\oplus
\bm{T}(i),
\qquad
\bm{D}(i)
=
\begin{pmatrix}
\bm{D}_{1}(i) & \bm{0}
\\
\bm{0} & \bm{D}_{2}(i)
\end{pmatrix},
\qquad
\bm{T}(i)
=
\begin{pmatrix}
\bm{0} & \bm{T}_{12}(i)
\\
\bm{0} & \bm{0}
\end{pmatrix}.
\label{E-AieqDiTi}
\end{equation}

Consider the product of the matrices $\bm{A}(i)$ over all $i=1,\ldots,k$, and denote it by
\begin{equation*}
\bm{A}_{k}
=
\bigotimes_{i=1}^{k}
\bm{A}(i)
=
\bigotimes_{i=1}^{k}
(\bm{D}(i)\oplus\bm{T}(i)).
\end{equation*}

To simplify further formulas, we introduce the notation:
\begin{equation*}
\bm{D}(l,m)
=
\bigotimes_{i=l}^{m}
\bm{D}(i),
\quad
\bm{D}_{j}(l,m)
=
\bigotimes_{i=l}^{m}
\bm{D}_{j}(i),
\quad
\bm{D}_{k}
=
\bm{D}(1,k),
\quad
\bm{D}_{jk}
=
\bm{D}_{j}(1,k),
\qquad
j=1,2;
\end{equation*}
where the empty products are thought of as equal to the identity matrix $\bm{I}$.

The next statement offers lower and upper bounds on the norm $\|\bm{A}_{k}\|$.
\begin{proposition}
Let $\bm{A}(i)$ for all $i=1,\ldots,k$ be matrices defined as \eqref{E-AieqDiTi}. Then, the following double-inequality holds:
\begin{multline}
\|\bm{D}_{1k}\|
\oplus
\|\bm{D}_{2k}\|
\leq
\|\bm{A}_{k}\|
\leq
\|\bm{D}_{1k}\|
\oplus
\|\bm{D}_{2k}\|
\\
\oplus
\bigoplus_{j=1}^{k}
\|\bm{T}(j)\|
\bigoplus_{i=1}^{k}
\|\bm{D}_{1}(1,i-1))\|\|\bm{D}_{2}(i+1,k)\|.
\label{I-normAk}
\end{multline}
\end{proposition}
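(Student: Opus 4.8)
The plan is to expand the product $\bm{A}_{k} = \bigotimes_{i=1}^{k}(\bm{D}(i)\oplus\bm{T}(i))$ by distributivity and exploit the block structure. First I would observe that since each $\bm{T}(i)$ has only the $(1,2)$-block nonzero and $\bm{T}(i)\bm{T}(j)=\bm{0}$ for the relevant sizes (the product of two strictly upper block matrices of this $2\times 2$-block shape vanishes), any term in the expansion that contains two or more factors $\bm{T}(\cdot)$ is zero. Hence the distributive expansion collapses to
\begin{equation*}
\bm{A}_{k}
=
\bm{D}(1,k)
\oplus
\bigoplus_{i=1}^{k}
\bm{D}(1,i-1)\,\bm{T}(i)\,\bm{D}(i+1,k),
\end{equation*}
where the first summand is the pure-diagonal term and the general summand in the sum is the unique term with exactly one $\bm{T}$-factor, namely $\bm{T}(i)$, in position $i$. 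Computing the blocks, $\bm{D}(1,k)=\operatorname{diag}(\bm{D}_{1k},\bm{D}_{2k})$, so its norm is $\|\bm{D}_{1k}\|\oplus\|\bm{D}_{2k}\|$; and the $i$-th correction term has only a $(1,2)$-block, equal to $\bm{D}_{1}(1,i-1)\,\bm{T}_{12}(i)\,\bm{D}_{2}(i+1,k)$.

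For the lower bound I would use that each summand is $\leq$ the whole tropical sum (the relations $x\leq x\oplus y$ extended to matrices componentwise, combined with $\|\bm{A}\oplus\bm{B}\|=\|\bm{A}\|\oplus\|\bm{B}\|$ and monotonicity of the norm), so $\|\bm{A}_{k}\| \geq \|\bm{D}(1,k)\| = \|\bm{D}_{1k}\|\oplus\|\bm{D}_{2k}\|$. For the upper bound I would apply $\|\bm{A}\oplus\bm{B}\|=\|\bm{A}\|\oplus\|\bm{B}\|$ to the expansion above and then bound the norm of each single-$\bm{T}$ term by submultiplicativity: $\|\bm{D}_{1}(1,i-1)\,\bm{T}_{12}(i)\,\bm{D}_{2}(i+1,k)\| \leq \|\bm{D}_{1}(1,i-1)\|\,\|\bm{T}_{12}(i)\|\,\|\bm{D}_{2}(i+1,k)\|$, and finally replace $\|\bm{T}_{12}(i)\|$ by $\|\bm{T}(i)\|$ (they are equal since $\bm{T}(i)$ has no other nonzero block). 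Summing over $i$ and reorganizing the indices to match the stated double sum — pulling $\bigoplus_{j}\|\bm{T}(j)\|$ out front as written, which is a (slightly lossy) upper bound for $\bigoplus_{i}\|\bm{T}(i)\|(\cdots)$ — gives the right-hand side of \eqref{I-normAk}.

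The main obstacle, and the step deserving the most care, is the distributive expansion itself: justifying rigorously that every mixed term with $\geq 2$ factors of type $\bm{T}$ vanishes, and that among the $2^{k}$ monomials in $\{\bm{D}(i),\bm{T}(i)\}$ exactly those with zero or one $\bm{T}$-factor survive, with the single-$\bm{T}$ monomial at position $i$ being precisely $\bm{D}(1,i-1)\bm{T}(i)\bm{D}(i+1,k)$. This is a bookkeeping argument about products of block-triangular matrices — one checks $\bm{T}(j)\bm{A}(i)$ and $\bm{A}(i)\bm{T}(j)$ on blocks and uses that a strictly-triangular-times-strictly-triangular product of this $2$-block type is $\bm{0}$, together with $\bm{D}(l,m_{1})\bm{D}(m_{1}+1,m_{2})=\bm{D}(l,m_{2})$ — but it must be done carefully since the final inequality's index arithmetic (the ranges $1,\ldots,i-1$ and $i+1,\ldots,k$, and the empty-product convention yielding $\bm{I}$ at the endpoints) hinges on getting this expansion exactly right.
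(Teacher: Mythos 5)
Your proposal is correct and follows essentially the same route as the paper: distribute the product, discard the terms with two or more $\bm{T}$-factors (which vanish by the block structure), bound the single-$\bm{T}$ terms via submultiplicativity of the norm, and pull $\bigoplus_{j}\|\bm{T}(j)\|$ out front; the lower bound likewise comes from monotonicity and the block-diagonality of $\bm{D}_{k}$. You are in fact slightly more explicit than the paper about why the mixed terms vanish and why the bound involves $\|\bm{D}_{1}(1,i-1)\|$ and $\|\bm{D}_{2}(i+1,k)\|$ rather than the norms of the full block-diagonal products.
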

\begin{proof}
To obtain a lower bound, we use the inequality $\bm{A}(i)=\bm{D}(i)\oplus\bm{T}(i)\geq\bm{D}(i)$, which holds for all $i=1,\ldots,k$. By combining these inequalities, we have 
\begin{equation*}
\|\bm{A}_{k}\|
=
\|\bm{A}(1)\cdots\bm{A}(k)\|
\geq
\|\bm{D}(1)\cdots\bm{D}(k)\|
=
\|\bm{D}(1,k)\|
=
\|\bm{D}_{k}\|.
\end{equation*}

Furthermore, we consider the upper bound. The application of the distributivity of multiplication over addition yields
\begin{equation*}
\bm{A}_{k}
=
\bigotimes_{i=1}^{k}
(\bm{D}(i)\oplus\bm{T}(i))
=
\bm{D}_{k}
\oplus
\bigoplus_{i=1}^{k}
\bm{D}(1,i-1)\bm{T}(i)\bm{D}(i+1,k).
\end{equation*}

We consider the product under summation and apply the properties of the norm to write
\begin{equation*}
\|\bm{D}(1,i-1)\bm{T}(i)\bm{D}(i+1,k)\|
\leq
\|\bm{D}_{1}(1,i-1)\|\|\bm{T}(i)\|\|\bm{D}_{2}(i+1,k)\|.
\end{equation*}

Since $\|\bm{T}(i)\|\leq\|\bm{T}(1)\|\oplus\cdots\oplus\|\bm{T}(k)\|$ for all $i$, we obtain the upper bound:
\begin{equation*}
\|\bm{A}_{k}\|
\leq
\|\bm{D}_{k}\|
\oplus
\bigoplus_{j=1}^{k}
\|\bm{T}(j)\|
\bigoplus_{i=1}^{k}
\|\bm{D}_{1}(1,i-1)\|\|\bm{D}_{2}(i+1,k)\|.
\end{equation*}
 
We note that the matrix $\bm{D}_{k}$ is block diagonal, and hence, $\|\bm{D}_{k}\|=\|\bm{D}_{1k}\|\oplus\|\bm{D}_{2k}\|$. It remains to combine both the lower and upper bounds, which yields \eqref{I-normAk}.
\end{proof}

\section{Stochastic Dynamic Systems}
\label{S-SDS}

In this section, we examine stochastic dynamic systems in the tropical algebra setting, which are used for the description of the evolution of the queueing system under study in the next section. The main purpose of this section is to evaluate the Lyapunov exponent for a dynamic system with a state transition matrix of special form. For further details on the application of tropical algebra to stochastic dynamic systems, one can consult \cite{Heidergott2006Maxplus,Heidergott2006Maxplus-linear}.

We consider a dynamic model that is governed by the state equation represented for all $k=1,2,\ldots$ in terms of max-plus algebra in the form:
\begin{equation} 
\bm{x}(k)
=
\bm{A}^{T}(k)
\bm{x}(k-1),
\qquad
\bm{x}(0)
=
\bm{1},
\label{E-SDE}
\end{equation}
where $\bm{x}(k)$ denotes a state $n$-vector and $\bm{A}(k)$ a state transition $(n\times n)$-matrix given by
\begin{equation*} 
\bm{x}(k)
=
\begin{pmatrix}
x_{1}(k)
\\
\vdots
\\
x_{n}(k)
\end{pmatrix},
\qquad
\bm{A}(k)
=
\begin{pmatrix}
a_{11}(k) & \ldots & a_{1n}(k)
\\
\vdots & \ddots & \vdots
\\
a_{n1}(k) & \ldots & a_{nn}(k)
\end{pmatrix}.
\end{equation*}

Each entry $a_{ij}(k)$ of the matrix $\bm{A}(k)$ may be an RV or a constant. The corresponding random entries in the matrices $\bm{A}(k)$ for $k=1,2,\ldots$ are assumed independent and identically distributed (i.i.d.) with finite expectation and variance. Note that the random entries in one matrix $\bm{A}(k)$ need not be independent.
   
We define the matrix product:
\begin{equation*}
\bm{A}_{k}
=
\bm{A}(1)\cdots\bm{A}(k).
\end{equation*} 

With this notation, the state dynamic equation at \eqref{E-SDE} can be reduced to
\begin{equation*}
\bm{x}(k)
=
\bm{A}_{k}^{T}\bm{x}(0).
\end{equation*} 
	
The Lyapunov exponent indicates the mean growth rate of the state vector, and it is defined as the limit: 
\begin{equation*}
\lambda
=
\lim_{k\to\infty}
\|\bm{x}(k)\|^{1/k}.
\end{equation*} 

We note that, in the context of max-plus algebra, the last definition is represented in the conventional form:
\begin{equation*}
\lambda
=
\lim_{k\to\infty}
\frac{1}{k}
\max(x_{1}(k),\ldots x_{n}(k)).
\end{equation*} 

Furthermore, with $\bm{x}(0)=\bm{1}$ where $\bm{1}=(0,\ldots,0)^{T}$, we have
\begin{equation*}
\|\bm{x}(k)\|
=
\|\bm{A}_{k}^{T}\bm{x}(0)\|
=
\|\bm{A}_{k}^{T}\|
=
\|\bm{A}_{k}\|,
\end{equation*} 
and then, rewrite the above limit as
\begin{equation*}
\lambda
=
\lim_{k\to\infty}
\|\bm{A}_{k}\|^{1/k}.
\end{equation*}

The next result from \cite{Krivulin2009Evaluation,Krivulin2009Methods} (see also \cite{Heidergott2006Maxplus,Heidergott2006Maxplus-linear}), which is a consequence of Kingman's subadditive ergodic theorem \cite{Kingman73Subadditive}, gives conditions for the above limit to exist with probability one (w.~p.~1) and evaluates $\lambda$ as the limit of the expected values of $\|\bm{A}_{k}\|^{1/k}$. 
\begin{theorem}
\label{T-SET}
Let $\{\bm{A}(k)|\ k=1,2,\ldots\}$ be a stationary sequence of random matrices such that $\mathsf{E}\|\bm{A}_{1}\|<\infty$ and $\rho(\mathsf{E}[\bm{A}_{1}])>-\infty$. Then, there exists a finite number $\lambda$ such that
\begin{equation*}
\lim_{k\to\infty}\|\bm{A}_{k}\|^{1/k}
=
\lambda
\quad
\text{w.~p.~1},
\qquad
\lim_{k\to\infty}\mathsf{E}\|\bm{A}_{k}\|^{1/k}
=
\lambda.
\end{equation*} 
\end{theorem}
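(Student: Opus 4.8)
The plan is to deduce Theorem~\ref{T-SET} from Kingman's subadditive ergodic theorem by verifying that the sequence $\log\|\bm{A}_{k}\|$ (or rather its max-plus analogue, which in this setting is literally $\|\bm{A}_{k}\|$ itself, since in max-plus algebra a ``power'' is an arithmetic product) forms a subadditive process. First I would record the submultiplicativity of the tropical norm from the preliminary results: $\|\bm{A}\bm{C}\|\leq\|\bm{A}\|\|\bm{C}\|$, which in conventional notation reads $\|\bm{A}\bm{C}\|\leq\|\bm{A}\|+\|\bm{C}\|$. Applied to the splitting $\bm{A}_{k+l}=\bm{A}(1)\cdots\bm{A}(k)\cdot\bm{A}(k+1)\cdots\bm{A}(k+l)$, and writing $\bm{A}_{m,n}=\bm{A}(m)\cdots\bm{A}(n)$, this gives $\|\bm{A}_{1,k+l}\|\leq\|\bm{A}_{1,k}\|+\|\bm{A}_{k+1,k+l}\|$; since the sequence $\{\bm{A}(k)\}$ is stationary, the family $\{\|\bm{A}_{m,n}\|\}_{m<n}$ is a stationary subadditive process in the sense required by Kingman's theorem.

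Next I would check the integrability hypothesis of Kingman's theorem, namely $\mathsf{E}\,\|\bm{A}_{1,n}\|<\infty$ for each $n$ and that $\inf_n \tfrac1n\mathsf{E}\,\|\bm{A}_{1,n}\|>-\infty$. The first part follows from the stated assumption $\mathsf{E}\|\bm{A}_{1}\|<\infty$ together with submultiplicativity and stationarity: $\mathsf{E}\|\bm{A}_{1,n}\|\leq n\,\mathsf{E}\|\bm{A}_{1}\|<\infty$. For the lower bound one uses the other hypothesis $\rho(\mathsf{E}[\bm{A}_{1}])>-\infty$. Here the argument is to pass expectations through the (monotone) norm and power operations: because the norm and tropical matrix product are built from $\oplus=\max$ and $\otimes=+$, which are convex, Jensen's inequality yields $\mathsf{E}\|\bm{A}_{1,n}\|\geq\|\mathsf{E}[\bm{A}_{1,n}]\|\geq\|(\mathsf{E}[\bm{A}_{1}])^{n}\|$, where independence of the matrices across $k$ lets the expectation factor through the product. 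Then Theorem~\ref{T-limAkl} (the deterministic limit) gives $\|(\mathsf{E}[\bm{A}_{1}])^{n}\|\geq\rho^{n}(\mathsf{E}[\bm{A}_{1}])\cdot$(a constant, via inequality~\eqref{I-AklerhoA} read in the reverse direction, or simply $\|\bm{B}^{n}\|^{1/n}\to\rho(\bm{B})$), so $\tfrac1n\mathsf{E}\|\bm{A}_{1,n}\|$ is bounded below by a quantity converging to $\rho(\mathsf{E}[\bm{A}_{1}])>-\infty$. Kingman's theorem then delivers a finite constant $\lambda=\inf_n\tfrac1n\mathsf{E}\|\bm{A}_{1,n}\|=\lim_n\tfrac1n\mathsf{E}\|\bm{A}_{1,n}\|$ with $\tfrac1n\|\bm{A}_{1,n}\|\to\lambda$ w.~p.~1, which is exactly the conclusion once one rewrites arithmetic division by $n$ as the tropical $1/n$-power.

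The convergence in expectation, $\mathsf{E}\|\bm{A}_{k}\|^{1/k}\to\lambda$, would come from the same application of Kingman's theorem, whose standard statement already includes $L^1$-convergence of $\tfrac1n\|\bm{A}_{1,n}\|$ under the stated integrability; alternatively it follows from the a.s.\ convergence plus uniform integrability, the latter being a consequence of the domination $0\leq\tfrac1n\|\bm{A}_{1,n}\|\leq\tfrac1n\sum_{i=1}^n\|\bm{A}(i)\|$ and the integrability of $\|\bm{A}(1)\|$ (a one-sided bound suffices since the lower bound is deterministic up to the $\rho$-term). I expect the main obstacle to be the lower-bound step: making the interchange ``$\mathsf{E}$ of a tropical matrix product $\geq$ tropical product of $\mathsf{E}$'s'' fully rigorous requires care with the sign conventions (max-plus entries can be $-\infty$, and $\mathsf{E}$ of a $\max$ dominates the $\max$ of $\mathsf{E}$'s only in the convex direction), and one must invoke independence across the index $k$ at precisely the right place so that $\mathsf{E}[\bm{A}_{1,n}]=(\mathsf{E}[\bm{A}_{1}])^{n}$ holds entrywise in the tropical sense. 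Everything else—submultiplicativity, stationarity, the upper integrability bound—is routine given the norm identities and Theorem~\ref{T-limAkl} already established.
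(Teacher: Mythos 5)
Your proposal is essentially the paper's own route: the paper does not prove Theorem~\ref{T-SET} but cites it as a consequence of Kingman's subadditive ergodic theorem, and your derivation (submultiplicativity of the tropical norm giving a stationary subadditive process, the upper integrability bound from $\mathsf{E}\|\bm{A}_{1}\|<\infty$, the lower bound via $\rho(\mathsf{E}[\bm{A}_{1}])>-\infty$, and $L^{1}$-convergence from Kingman) is the standard way to fill in that citation. One small correction: in the lower-bound step you do not need, and should not invoke, independence across $k$ — the theorem is stated for merely stationary sequences, and the inequality $\mathsf{E}[\bm{A}_{1,n}]\geq(\mathsf{E}[\bm{A}_{1}])^{n}$ (entrywise, in the tropical sense) follows from $\mathsf{E}[\max]\geq\max[\mathsf{E}]$ together with linearity of expectation on each path sum and stationarity; the corresponding equality is false in general even for i.i.d.\ matrices, so it is the one-sided Jensen bound that carries the argument.
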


Since the matrices $\bm{A}(k)$ are assumed to be i.i.d., the sequence of these matrices is stationary. Moreover, since the random entries in $\bm{A}(k)$ have finite expected values, the condition $\mathsf{E}\|\bm{A}_{1}\|<\infty$ holds. The condition $\rho(\mathsf{E}[\bm{A}_{1}])>-\infty$ actually means that the sequence of matrices $\bm{A}_{k}$ does not degenerate into a zero matrix $\bm{0}$ (which has all entries equal to $-\infty$ in max-plus algebra), and it is assumed satisfied.

It follows from Theorem~\ref{T-SET} that, for the dynamic systems under consideration, the Lyapunov exponent exists and can be found as the limit of the expected values $\mathsf{E}\|\bm{A}_{k}\|^{1/k}$ as $k$ tends to $\infty$. The evaluation of the limit and of the expectations $\mathsf{E}\|\bm{A}_{k}\|$ themselves can be a difficult problem. However, it is not difficult to solve the problem for matrices $\bm{A}(k)$ that have a particular form or structure \cite{Krivulin2009Evaluation,Krivulin2009Methods}. Specifically, if the matrices $\bm{A}(k)$ are triangular, then the Lyapunov exponent is calculated as
\begin{equation*}
\lambda
=
\mathop\mathrm{tr}\mathsf{E}[\bm{A}_{1}]
=
\bigoplus_{i=1}^{n}\mathsf{E}[a_{ii}(1)].
\end{equation*} 

In the context of max-plus algebra, the above formula turns into the maximum of the expected values of the diagonal entries in $\bm{A}_{1}=\bm{A}(1)$ given by
\begin{equation*}
\lambda
=
\max_{1\leq i\leq n}\mathsf{E}[a_{ii}(1)].
\end{equation*} 

We note that the same result is valid for the diagonal matrices $\bm{A}(k)$ as well. Moreover, this result can be readily extended to the system \eqref{E-SDE} with block diagonal matrices.
\begin{lemma}
Let $\bm{A}(k)$ for $k=1,2,\ldots$ be block diagonal matrices of the form:
\begin{equation*}
\bm{A}(k)
=
\begin{pmatrix}
\bm{D}_{1}(k) & & \bm{0}
\\
& \ddots & 
\\
\bm{0} & & \bm{D}_{s}(k)
\end{pmatrix}.
\end{equation*}

Consider matrices $\bm{D}_{rk}=\bm{D}_{r}(1)\cdots\bm{D}_{r}(k)$, and suppose that $\|\bm{D}_{rk}\|^{1/k}\rightarrow\mu_{r}$ w.~p.~1 as $k\to\infty$ for all $r=1,\ldots,s$. Then, the Lyapunov exponent of the system \eqref{E-SDE} is given by
\begin{equation*}
\lambda
=
\bigoplus_{r=1}^{s}\mu_{r}.
\end{equation*}
\end{lemma}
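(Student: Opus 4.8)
The plan is to exploit the fact that block diagonality is preserved under tropical matrix multiplication and that the tropical norm of a block diagonal matrix decouples into the tropical sum of the norms of its diagonal blocks; everything then reduces to the hypothesis on the individual blocks together with the definition of the Lyapunov exponent.

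First I would show, by induction on $k$ and the componentwise formula for matrix multiplication, that the product of block diagonal matrices with a common block partition is again block diagonal, with $r$th diagonal block equal to the product of the $r$th blocks:
\[
\bm{A}_{k}
=
\bm{A}(1)\cdots\bm{A}(k)
=
\begin{pmatrix}
\bm{D}_{1k} & & \bm{0}
\\
& \ddots &
\\
\bm{0} & & \bm{D}_{sk}
\end{pmatrix},
\]
since in the inductive step every entry lying outside the diagonal blocks is a tropical sum of products each of which contains a factor from a zero off-diagonal block, and hence equals $\mathbb{0}$.

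Next, because all off-diagonal blocks of $\bm{A}_{k}$ have entries equal to $\mathbb{0}$, the norm $\|\bm{A}_{k}\|=\bigoplus_{i,j}(\bm{A}_{k})_{ij}$ only picks up entries inside the diagonal blocks, so that $\|\bm{A}_{k}\|=\bigoplus_{r=1}^{s}\|\bm{D}_{rk}\|$. Raising both sides to the rational power $1/k$ and iterating the binomial identity $(x\oplus y)^{q}=x^{q}\oplus y^{q}$ (valid for $q\geq0$) to distribute the exponent over the finite tropical sum, I get
\[
\|\bm{A}_{k}\|^{1/k}
=
\bigoplus_{r=1}^{s}
\|\bm{D}_{rk}\|^{1/k}.
\]
Now I pass to the limit $k\to\infty$: each of the finitely many sequences $\|\bm{D}_{rk}\|^{1/k}$ converges w.~p.~1 to $\mu_{r}$ by assumption, and the tropical sum of finitely many terms is the pointwise maximum, which is a continuous operation; hence the left-hand side converges w.~p.~1 to $\bigoplus_{r=1}^{s}\mu_{r}$. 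Since the Lyapunov exponent of the system \eqref{E-SDE} is by definition $\lambda=\lim_{k\to\infty}\|\bm{A}_{k}\|^{1/k}$ (and, under the integrability hypotheses, Theorem~\ref{T-SET} identifies this with $\lim_{k\to\infty}\mathsf{E}\|\bm{A}_{k}\|^{1/k}$), the claimed formula $\lambda=\bigoplus_{r=1}^{s}\mu_{r}$ follows.

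I do not expect a genuine obstacle here: the only points requiring care are the interchange of the exponent $1/k$ with the tropical sum and of the limit with the tropical sum, both of which are immediate because the number of blocks is finite. The real content of the lemma is the structural observation that block diagonality is inherited by matrix products and that the norm splits across the blocks; once these are in place the reduction to the hypothesis on $\|\bm{D}_{rk}\|^{1/k}$ is routine.
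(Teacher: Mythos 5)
Your argument is correct and follows essentially the same route as the paper's proof: block diagonality is preserved under the product, the norm splits as $\|\bm{A}_{k}\|=\bigoplus_{r}\|\bm{D}_{rk}\|$, the binomial identity distributes the exponent $1/k$, and the limit passes through the finite tropical sum. You merely spell out the induction and the continuity of the maximum, which the paper leaves implicit.
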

\begin{proof}
Since the matrix product $\bm{A}_{k}=\bm{A}(1)\cdots\bm{A}(k)$ has the same block diagonal form as $\bm{A}(k)$, we can write $\|\bm{A}_{k}\|=\|\bm{D}_{1k}\|\oplus\cdots\oplus\|\bm{D}_{sk}\|$. Furthermore, we apply the binomial identity to write the equality $\|\bm{A}_{k}\|^{1/k}=\|\bm{D}_{1k}\|^{1/k}\oplus\cdots\oplus\|\bm{D}_{sk}\|^{1/k}$.  It remains to let $k$ go to $\infty$ on both sides of the equality, which yields the desired result.
\end{proof}

The extension of this result to block triangular matrices seems to be not so easy. Below, we evaluate the Lyapunov exponent for block triangular matrices of special form.  

Consider a dynamic system with state transition matrices $\bm{A}(k)$ of block triangular form defined as \eqref{E-AieqDiTi} in the framework of max-plus algebra. We suppose that the upper diagonal block reduces to an RV $\alpha_{k}$ and the lower is a constant nonrandom matrix $\bm{D}$ to write
\begin{equation*}
\bm{D}_{1}(k)
=
\begin{pmatrix}
\alpha_{k}
\end{pmatrix},
\qquad
\bm{D}_{2}(k)
=
\bm{D}.
\end{equation*}

We assume that $\alpha_{k}$ for $k=1,2,\ldots$ are i.i.d. RVs that have a finite expected value and variance, and the matrix $\bm{D}$ has no zero entries. The RVs $\|\bm{T}(k)\|$ are also assumed i.i.d. with nonnegative expectation and finite variance.

\begin{lemma}
\label{L-Lyapunov}
Let $\mathsf{E}\alpha_{k}=\mu_{1}\geq0$ be the expected value of $\alpha_{k}$ and $\rho(\bm{D})=\mu_{2}>0$ be the spectral radius of $\bm{D}$. Then, the Lyapunov exponent of the system is given by $\lambda=\mu_{1}\oplus\mu_{2}=\max(\mu_{1},\mu_{2})$.
\end{lemma}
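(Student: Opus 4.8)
The plan is to sandwich $\|\bm{A}_{k}\|$ between the two sides of the double inequality \eqref{I-normAk} and then pass to the limit in $\|\bm{A}_{k}\|^{1/k}$. Here the block-diagonal parts are $\bm{D}_{1}(i)=(\alpha_{i})$ and $\bm{D}_{2}(i)=\bm{D}$ for every $i$, so, working in the max-plus notation ($\oplus=\max$, $\otimes=+$) and writing $S_{j}=\alpha_{1}+\cdots+\alpha_{j}$ with $S_{0}=0$, one has $\|\bm{D}_{1k}\|=S_{k}$, $\|\bm{D}_{2k}\|=\|\bm{D}^{k}\|$, $\|\bm{D}_{1}(1,i-1)\|=S_{i-1}$ and $\|\bm{D}_{2}(i+1,k)\|=\|\bm{D}^{k-i}\|$. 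Thus \eqref{I-normAk} reduces to $\max(S_{k},\|\bm{D}^{k}\|)\le\|\bm{A}_{k}\|\le\max(S_{k},\|\bm{D}^{k}\|,R_{k})$, where $R_{k}=\max_{1\le j\le k}\|\bm{T}(j)\|+\max_{1\le i\le k}(S_{i-1}+\|\bm{D}^{k-i}\|)$. Raising all sides to the power $1/k$ (which in max-plus is division by $k$, is order-preserving, and distributes over $\max$ by the binomial identity) and using that $\liminf$ and $\limsup$ interact well with finite maxima, it suffices to determine the limiting behaviour of $S_{k}/k$, $\|\bm{D}^{k}\|^{1/k}$, and $R_{k}/k$ as $k\to\infty$.

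For the lower bound, $S_{k}/k\to\mu_{1}$ with probability one by the strong law of large numbers, since the $\alpha_{i}$ are i.i.d. with mean $\mu_{1}$, while $\|\bm{D}^{k}\|^{1/k}\to\rho(\bm{D})=\mu_{2}$ by Theorem~\ref{T-limAkl}. Hence $\liminf_{k\to\infty}\|\bm{A}_{k}\|^{1/k}\ge\max(\mu_{1},\mu_{2})$ with probability one.

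For the upper bound, I would first linearize the powers of $\bm{D}$: since $\bm{D}$ has no zero entries, \eqref{I-AklerhoA} gives $\|\bm{D}^{k-i}\|\le(k-i)\mu_{2}+C$ for all $0\le i\le k$, where $C=\|\bm{D}\bm{D}^{-}\|$ is a constant. Then I would estimate $\max_{1\le i\le k}(S_{i-1}+(k-i)\mu_{2})$ as follows: given $\varepsilon>0$, the strong law provides a finite (random) index $K$ with $S_{j}\le(\mu_{1}+\varepsilon)j$ for all $j\ge K$, so that for $i-1\ge K$ one has $S_{i-1}+(k-i)\mu_{2}\le(i-1)(\mu_{1}+\varepsilon)+(k-i)\mu_{2}\le(k-1)\max(\mu_{1}+\varepsilon,\mu_{2})$, while for $i-1<K$, using that the $\alpha$'s are nonnegative and $\mu_{2}>0$, one has $S_{i-1}+(k-i)\mu_{2}\le S_{K}+(k-1)\mu_{2}$. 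Dividing by $k$, letting $k\to\infty$ and then $\varepsilon\to0$, this yields $\limsup_{k\to\infty}\frac{1}{k}\max_{1\le i\le k}(S_{i-1}+\|\bm{D}^{k-i}\|)\le\max(\mu_{1},\mu_{2})$ with probability one. Finally, since the $\|\bm{T}(j)\|$ are i.i.d. with finite mean, a Borel--Cantelli argument gives $\frac{1}{k}\max_{1\le j\le k}\|\bm{T}(j)\|\to0$ with probability one. Adding the two estimates, $\limsup_{k\to\infty}R_{k}/k\le\max(\mu_{1},\mu_{2})$, hence $\limsup_{k\to\infty}\|\bm{A}_{k}\|^{1/k}\le\max(\mu_{1},\mu_{2})$ with probability one.

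Combining the bounds shows that $\|\bm{A}_{k}\|^{1/k}\to\max(\mu_{1},\mu_{2})$ with probability one; since the hypotheses of Theorem~\ref{T-SET} hold here (the sequence $\{\bm{A}(k)\}$ is stationary, $\mathsf{E}\|\bm{A}_{1}\|<\infty$ because all entries have finite expectation, and $\rho(\mathsf{E}[\bm{A}_{1}])>-\infty$ because $\bm{D}$ has no zero entries), this common limit is precisely the Lyapunov exponent, i.e. $\lambda=\mu_{1}\oplus\mu_{2}=\max(\mu_{1},\mu_{2})$. The step I expect to be the main obstacle is the control of the cross term $R_{k}$: the limits of $S_{k}/k$ and $\|\bm{D}^{k}\|^{1/k}$ are standard, but handling $\max_{1\le i\le k}(S_{i-1}+\|\bm{D}^{k-i}\|)$ requires combining the deterministic estimate \eqref{I-AklerhoA} with the strong law, and one must check that both regimes — the ``deterministic-drift'' case $\mu_{1}>\mu_{2}$ and the ``spectral-radius'' case $\mu_{1}\le\mu_{2}$, together with the boundary indices $i=1$ and $i=k$ — are uniformly absorbed by the bound $(k-1)\max(\mu_{1}+\varepsilon,\mu_{2})$.
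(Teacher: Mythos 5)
Your argument is correct, and it follows the paper's skeleton: both proofs sandwich $\|\bm{A}_{k}\|$ with the double inequality \eqref{I-normAk}, linearize the powers of $\bm{D}$ via \eqref{I-AklerhoA}, and reduce everything to the asymptotics of $S_{k}$, $\|\bm{D}^{k}\|$, and the cross term. Where you diverge is in the probabilistic machinery used to control the maxima. The paper works with expectations throughout: it bounds $\mathsf{E}\max_{1\le j\le k}\|\bm{T}(j)\|$ by the classical $O(k^{1/2})$ growth of the expected maximum of i.i.d.\ RVs with finite variance, and it handles the cross term by recentering, writing $\max_{1\le i+m\le k}(S_{i}+m\mu_{2})=k\mu_{2}+\max_{1\le i\le k}\sum_{j\le i}(\alpha_{j}-\mu_{2})$ and invoking an $O(k^{1/2})$ bound on the expected maximum of cumulative sums with nonpositive drift; this yields $\tfrac{1}{k}\mathsf{E}\|\bm{A}_{k}\|\to\max(\mu_{1},\mu_{2})$ directly, i.e.\ the second limit in Theorem~\ref{T-SET}. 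You instead argue pathwise: the strong law with a random threshold index $K$ absorbs the cross term into $(k-1)\max(\mu_{1}+\varepsilon,\mu_{2})$, and a Borel--Cantelli argument (needing only a finite first moment of $\|\bm{T}(j)\|$) kills $\max_{j\le k}\|\bm{T}(j)\|/k$, giving almost-sure convergence of $\|\bm{A}_{k}\|^{1/k}$, i.e.\ the first limit in Theorem~\ref{T-SET}. Your route is more self-contained --- it avoids the two cited expected-maximum growth results and needs one moment fewer on $\|\bm{T}(j)\|$ --- at the cost of the extra bookkeeping in the split $i-1\ge K$ versus $i-1<K$; the paper's route delivers the convergence of expectations in one stroke but imports external lemmas. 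Either way, Theorem~\ref{T-SET} identifies the common limit with the Lyapunov exponent, so the conclusion $\lambda=\max(\mu_{1},\mu_{2})$ is reached correctly in both.
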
 
\begin{proof}
To verify the statement, we show that $\mathsf{E}\|\bm{A}_{k}\|^{1/k}\longrightarrow\lambda=\max(\mu_{1},\mu_{2})$ as $k\to\infty$.

We substitute $\bm{D}_{1}(k)=\begin{pmatrix}\alpha_{k}\end{pmatrix}$ and $\bm{D}_{2}(k)=\bm{D}$ into the double-inequality \eqref{I-normAk}, which yields
\begin{equation}
\alpha_{1}\cdots\alpha_{k}
\oplus
\|\bm{D}^{k}\|
\leq
\|\bm{A}_{k}\|
\leq
\alpha_{1}\cdots\alpha_{k}
\oplus
\|\bm{D}^{k}\|
\oplus
\bigoplus_{j=1}^{k}
\|\bm{T}(j)\|
\bigoplus_{i=1}^{k}
(\alpha_{1}\cdots\alpha_{i-1})\|\bm{D}^{k-i}\|.
\label{I-normAk1}
\end{equation}

First, we examine the right inequality. We apply \eqref{I-AklerhoA} to see that $\|\bm{D}^{k}\|\leq\mu_{2}^{k}\|\bm{D}\bm{D}^{-}\|$ and $\|\bm{D}^{k-i}\|\leq\mu_{2}^{k-i}\|\bm{D}\bm{D}^{-}\|\leq\mu_{2}^{k-i+1}\|\bm{D}\bm{D}^{-}\|$. Observing that $\|\bm{D}\bm{D}^{-}\|\geq\|\bm{I}\|=\mathbb{1}$ and $\|\bm{T}(j)\|\geq\mathbb{1}$, we can write the inequalities:
\begin{gather*}
\alpha_{1}\cdots\alpha_{k}
\oplus
\|\bm{D}^{k}\|
\leq
(\alpha_{1}\cdots\alpha_{k}
\oplus
\mu_{2}^{k})
\|\bm{D}\bm{D}^{-}\|
\bigoplus_{j=1}^{k}
\|\bm{T}(j)\|,
\\
\bigoplus_{i=1}^{k}
(\alpha_{1}\cdots\alpha_{i-1})\|\bm{D}^{k-i}\|
\leq
\|\bm{D}\bm{D}^{-}\|
\bigoplus_{i=1}^{k}
(\alpha_{1}\cdots\alpha_{i-1})\mu_{2}^{k-i+1}.
\end{gather*}

With these inequalities, we expand the right inequality at \eqref{I-normAk1} as follows:
\begin{equation*}
\|\bm{A}_{k}\|
\leq
\|\bm{D}\bm{D}^{-}\|
\bigoplus_{j=1}^{k}
\|\bm{T}(j)\|
\bigoplus_{1\leq i+m\leq k}
(\alpha_{1}\cdots\alpha_{i})\mu_{2}^{m}.
\end{equation*}

Next, we rewrite the last inequality in terms of ordinary operations and take expectations. Taking into account that $\mathsf{E}\|\bm{D}\bm{D}^{-}\|=\|\bm{D}\bm{D}^{-}\|$, we obtain
\begin{equation}
\mathsf{E}\|\bm{A}_{k}\|
\leq
\|\bm{D}\bm{D}^{-}\|
+
\mathsf{E}\max_{1\leq j\leq k}
\|\bm{T}(j)\|
+
\mathsf{E}\max_{1\leq i+m\leq k}
(\alpha_{1}+\cdots+\alpha_{i}+m\mu_{2}).
\label{I-normAk2}
\end{equation}

We note that $\|\bm{D}\bm{D}^{-}\|$ is bounded, and hence, $\|\bm{D}\bm{D}^{-}\|/k\rightarrow0$ as $k\to\infty$.

Furthermore, $\|\bm{T}(j)\|$ for $j=1,2,\ldots$ are i.i.d. RVs with finite expectation and variance. As $k$ goes to $\infty$, the expected value of the maximum of these RVs grows as $O(k^{1/2})$ \cite{Gumbel1954Maxima,Hartley1954Universal}, and therefore,
\begin{equation*}
\frac{1}{k}
\mathsf{E}
\max_{1\leq j\leq k}
\|\bm{T}(j)\|
\rightarrow
0.
\end{equation*}

Consider the last term on the right-hand side of \eqref{I-normAk2}, and suppose that $\mathsf{E}\alpha_{1}=\mu_{1}\leq\mu_{2}$. We represent this term as 
\begin{equation*}
\mathsf{E}\max_{1\leq i+m\leq k}
(\alpha_{1}+\cdots+\alpha_{i}+m\mu_{2})
=
k\mu_{2}
+
\mathsf{E}\max_{1\leq i\leq k}
((\alpha_{1}-\mu_{2})+\cdots+(\alpha_{i}-\mu_{2})).
\end{equation*}

We observe that $\alpha_{i}-\mu_{2}$ are i.i.d. RVs with the expectation $\mathsf{E}(\alpha_{i}-\mu_{2})\leq0$ and finite variance. Since the expected value of the maximum of the cumulative sums of these variables grows as $O(k^{1/2})$ as $k$ tends to $\infty$ (see, e.g., \cite{Krivulin2001Evaluation}), we have

\begin{equation*}
\frac{1}{k}
\mathsf{E}\max_{1\leq i+m\leq k}
(\alpha_{1}+\cdots+\alpha_{i}+m\mu_{2})
\rightarrow
\mu_{2}.
\end{equation*}

Using similar arguments, we can verify that if $\mu_{1}\geq\mu_{2}$, then  
\begin{equation*}
\frac{1}{k}
\mathsf{E}\max_{1\leq i+m\leq k}
(\alpha_{1}+\cdots+\alpha_{i}+m\mu_{2})
\rightarrow
\mu_{1}.
\end{equation*}

As a result, we conclude that 
\begin{equation*}
\lambda
=
\lim_{k\to\infty}
\frac{1}{k}
\mathsf{E}\|\bm{A}_{k}\|
\leq
\max(\mu_{1},\mu_{2}).
\end{equation*}

Consider the left inequality at \eqref{I-normAk1}. As $k$ tends to $\infty$, we have $\|\bm{D}^{k}\|^{1/k}\rightarrow\rho(\bm{D})=\mu_{2}$. Moreover, after rewriting the term $(\alpha_{1}\cdots\alpha_{k})^{1/k}$ in terms of the usual operations, we see that
\begin{equation*} 
\frac{1}{k}(\alpha_{1}+\cdots+\alpha_{k})
\rightarrow
\mathsf{E}\alpha_{1}
=
\mu_{1}.
\end{equation*} 

Therefore, the left inequality leads to the inequality:
\begin{equation*}
\lambda
=
\lim_{k\to\infty} 
\frac{1}{k}
\mathsf{E}\|\bm{A}_{k}\|
\geq
\max(\mu_{1},\mu_{2}).
\end{equation*} 

Since the opposite inequality holds, we arrive at the conclusion that
\begin{equation*}
\lambda
=
\max(\mu_{1},\mu_{2}),
\end{equation*} 
which completes the proof.
\end{proof}

It is not difficult to see that this result remains valid if the matrix $\bm{D}$ may have zero entries, but some of its power $\bm{D}^{p}$ is a matrix without zero entries. Indeed, in this case, we can consider a dynamic system:
\begin{equation*}
\bm{x}^{\prime}(k)
=
\bm{A}^{\prime}(k)\bm{x}^{\prime}(k-1),
\end{equation*}
where we use the notation:
\begin{equation*}
\bm{x}^{\prime}(k)
=
\bm{x}(pk),
\qquad
\bm{A}^{\prime}(k)
=
\bm{A}(pk-p+1)\cdots\bm{A}(pk).
\end{equation*}

The matrix $\bm{A}^{\prime}(k)$ has a block triangular form with $\bm{D}^{p}$ as its lower diagonal block.

For this system, we have $\mu_{1}^{\prime}=p\mu_{1}$ and $\mu_{2}^{\prime}=p\mu_{2}$, which yields the Lyapunov exponent $\lambda^{\prime}=\max(p\mu_{1},p\mu_{2})=p\lambda$. Turning back to the initial system, we obtain the solution $\lambda=\max(\mu_{1},\mu_{2})$ provided by the above result.

\section{Application to Battery Swapping and Charging Station Model}
\label{S-ABSCSM}

We are now in a position to apply the previous results to represent the BSCS queueing model in terms of max-plus algebra and evaluate the mean operation cycle time for the model. We start with scalar recurrence Equations \eqref{E-xkeqxk1alphak-ykeqmaxxkyk1cb}, which describe the dynamics of the model, and represent these equations in terms of max-plus algebra. Next, we rewrite the scalar equations in vector form as an implicit vector equation. This equation is solved to obtain an explicit state dynamic equation with a state transition matrix having random entries. Finally, by applying results from the previous sections, we derive an exact formula for calculating the mean cycle time under evaluation. 

\subsection{Tropical Representation of the Model}

Let us rewrite the equations in \eqref{E-xkeqxk1alphak-ykeqmaxxkyk1cb} in terms of max-plus algebra. After replacing the operation $\max$ by the addition $\oplus$ and $+$ by the multiplication $\otimes$ (the sign $\otimes$ is eliminated from the subsequent expressions), the equations become linear in the tropical sense and take the form:
\begin{align*}
x(k)
&=
\alpha_{k}x(k-1),
\\
y(k)
&=
bx(k)\oplus by(k-1)\oplus bcy(k-m)).
\end{align*}

To represent the dynamic system in vector form, we introduce the following vector and matrices (where we use the notation $\mathbb{0}=-\infty$ and $\mathbb{1}=0$):
\begin{gather*}
\bm{v}(k)
=
\begin{pmatrix}
x(k)
\\
y(k)
\\
y(k-1)
\\
\vdots
\\
y(k-m+1)
\end{pmatrix},
\qquad
\bm{B}(k)
=
\begin{pmatrix}
\mathbb{0} & \mathbb{0} & \dots & \dots & \mathbb{0}
\\
b & \mathbb{0} & \dots & \dots & \mathbb{0}
\\
\mathbb{0} & \mathbb{0} & \ddots & & \mathbb{0}
\\
\vdots & \vdots & \ddots & \ddots & \vdots
\\
\mathbb{0} & \mathbb{0} & \dots & \mathbb{0} & \mathbb{0}
\end{pmatrix}
=
\bm{B},
\\
\bm{C}(k)
=
\begin{pmatrix}
\alpha_{k} & \mathbb{0} & \mathbb{0} & \dots & \mathbb{0} & \mathbb{0}
\\
\mathbb{0} & b & \mathbb{0} & \dots & \mathbb{0} & bc
\\
\mathbb{0} & \mathbb{1} & \mathbb{0} & \dots & \mathbb{0} & \mathbb{0}
\\
\vdots & & \ddots & \ddots & & \vdots
\\
\vdots & & & \ddots & \ddots & \vdots
\\
\mathbb{0} & \mathbb{0} & \mathbb{0} & & \mathbb{1} & \mathbb{0}
\end{pmatrix}.
\end{gather*}

With this notation, the system is written as an implicit equation in $\bm{v}(k)$ in the form:
\begin{equation*}
\bm{v}(k)
=
\bm{B}\bm{v}(k)
\oplus
\bm{C}(k)\bm{v}(k-1).
\end{equation*}

We solve this equation for $\bm{v}(k)$ by using Lemma~\ref{L-Axbeqx}. First, we note that $\mathop\mathrm{tr}\bm{B}=\mathbb{0}$. Furthermore, we see that $\bm{B}^{2}=\bm{0}$, and hence, $\bm{B}^{i}=\bm{0}$ for all $i\geq2$. As a result, we have $\mathop\mathrm{Tr}(\bm{B})=\mathbb{0}$ and calculate
\begin{equation*}
\bm{B}^{\ast}
=
\bm{I}
\oplus
\bm{B}
=
\begin{pmatrix}
\mathbb{1} & \mathbb{0} & \dots & \dots & \mathbb{0}
\\
b & \mathbb{1} & & & \mathbb{0}
\\
\mathbb{0} & \mathbb{0} & \ddots & & \mathbb{0}
\\
\vdots & \vdots & & \ddots & 
\\
\mathbb{0} & \mathbb{0} & & & \mathbb{1}
\end{pmatrix}.
\end{equation*}

The application of Lemma~\ref{L-Axbeqx} leads to the explicit state dynamic equation:
\begin{equation*}
\bm{v}(k)
=
\bm{A}^{T}(k)\bm{v}(k-1)
\end{equation*}
with the state transition matrix:
\begin{equation}
\bm{A}^{T}(k)
=
\bm{B}^{\ast}\bm{C}(k)
=
\begin{pmatrix}
\alpha_{k} & \mathbb{0} & \mathbb{0} & \dots & \mathbb{0} & \mathbb{0}
\\
\alpha_{k}b & b & \mathbb{0} & \dots & \mathbb{0} & bc
\\
\mathbb{0} & \mathbb{1} & \mathbb{0} & \dots & \mathbb{0} & \mathbb{0}
\\
\vdots & & \ddots & \ddots & & \vdots
\\
\vdots & & & \ddots & \ddots & \vdots
\\
\mathbb{0} & \mathbb{0} & \mathbb{0} & & \mathbb{1} & \mathbb{0}
\end{pmatrix}.
\label{E-Ak}
\end{equation}

\subsection{Tropical Representation of Performance Measure}

We now exploit the dynamic model derived above to evaluate the mean cycle time $\lambda$ given by \eqref{E-lambda}. First, we see from scalar Equations \eqref{E-xkeqxk1alphak-ykeqmaxxkyk1cb} that the following inequalities are valid:

\begin{equation*}
y(k)
\geq
x(k),
\qquad
y(k)
\geq
y(k-1).
\end{equation*}

As a result, we obtain
\begin{equation*}
y(k)
=
\max(x(k),y(k),\ldots,y(k-m+1)).
\end{equation*}

Since the right-hand side of the above equality coincides with the max-plus algebra norm $\|\bm{v}(k)\|$, we conclude that
\begin{equation*}
y(k)
=
\|\bm{v}(k)\|
=
\|\bm{A}_{k}\|,
\qquad
\bm{A}_{k}
=
\bm{A}(1)\cdots\bm{A}(k).
\end{equation*}

Therefore, the mean cycle time \eqref{E-lambda} can be represented in terms of max-plus algebra as 
\begin{equation*}
\lambda
=
\lim_{k\to\infty}\|\bm{A}_{k}\|^{1/k}.
\end{equation*}

By Theorem~\ref{T-SET}, we can find the mean cycle time by evaluating the limit of the expected values as follows:
\begin{equation*}
\lambda
=
\lim_{k\to\infty}\mathsf{E}\|\bm{A}_{k}\|^{1/k}.
\end{equation*}

\subsection{Evaluation of Mean Cycle Time}

To evaluate the mean cycle time of the system, we apply Lemma~\ref{L-Lyapunov}. Consider the state transition matrix $\bm{A}(k)$ at \eqref{E-Ak}, and note that it has the block triangular form:
\begin{equation*}
\bm{A}(k)
=
\begin{pmatrix}
\alpha_{k} & \alpha_{k}b & \mathbb{0} & \dots & \mathbb{0} & \mathbb{0}
\\
\mathbb{0} & b & \mathbb{1} & & \mathbb{0} & \mathbb{0}
\\
\mathbb{0} & \mathbb{0} & \mathbb{0} & \ddots & \mathbb{0} & \mathbb{0}
\\
\vdots & \vdots & \vdots & \ddots & \ddots & 
\\
\mathbb{0} & \mathbb{0} & \mathbb{0} & \ldots & \mathbb{0} & \mathbb{1}
\\
\mathbb{0} & bc & \mathbb{0} & \ldots & \mathbb{0} & \mathbb{0}
\end{pmatrix}
=
\begin{pmatrix}
\bm{D}_{1}(k) & \bm{T}_{12}(k)
\\
\bm{0} & \bm{D}_{2}(k)
\end{pmatrix},
\end{equation*}
where the matrix blocks are given by
\begin{gather*}
\bm{D}_{1}(k)
=
\begin{pmatrix}
\alpha_{k}
\end{pmatrix},
\qquad
\bm{T}_{12}(k)
=
\begin{pmatrix}
\alpha_{k}b & \mathbb{0} & \ldots & \mathbb{0}
\end{pmatrix},
\\
\bm{0}
=
\begin{pmatrix}
\mathbb{0}
\\
\vdots
\\
\mathbb{0}
\end{pmatrix},
\qquad
\bm{D}_{2}(k)
=
\bm{D}_{2}
=
\begin{pmatrix}
b & \mathbb{1} & & \mathbb{0} & \mathbb{0}
\\
\mathbb{0} & \mathbb{0} & \ddots & \mathbb{0} & \mathbb{0}
\\
\vdots & \vdots & \ddots & \ddots &
\\
\mathbb{0} & \mathbb{0} & \ldots & \mathbb{0} & \mathbb{1}
\\
bc & \mathbb{0} & \ldots & \mathbb{0} & \mathbb{0}
\end{pmatrix}.
\end{gather*}

As is easy to see, the state transition matrix $\bm{A}(k)$ has the same form as in Lemma~\ref{L-Lyapunov} and satisfies the assumptions of this lemma. Moreover, it is not difficult to verify by direct computation that the matrix $\bm{D}^{m-1}$ has no zero entries.

It follows from Lemma~\ref{L-Lyapunov} that the Lyapunov exponent (the mean cycle time) of the system is given by\vspace{-4pt}
\begin{equation*}
\lambda
=
\mu_{1}\oplus\mu_{2},
\qquad
\mu_{1}
=
\mathsf{E}\alpha_{1}
=
a,
\qquad
\mu_{2}
=
\rho(\bm{D}).
\end{equation*}

The evaluation of the spectral radius $\rho(\bm{D})$ by using \eqref{E-rhoA} yields
\begin{equation*}
\mu_{2}
=
b\oplus(bc)^{1/m}.
\end{equation*}

As a result, the mean cycle time is represented in terms of max-plus algebra as
\begin{equation*}
\lambda
=
a
\oplus
b\oplus(bc)^{1/m}.
\end{equation*}

After rewriting in terms of the conventional algebra, we have
\begin{equation}
\lambda
=
\max(a,b,(b+c)/m).
\label{E-lambda1}
\end{equation}

Finally, we note that the obtained formula takes into account the expected value $a=\mathsf{E}\alpha_{1}$ of the random interarrival time of incoming EVs and does not require a complete description of the underlying probability distribution. To illustrate this result by numerical experiments, we used Monte Carlo simulation to estimate the mean cycle time of the system over a long time horizon.

The simulation model consists of the recurrence relations in \eqref{E-xkeqxk1alphak-ykeqmaxxkyk1cb}, which formally describe the evolution of the system. The simulation experiment involves the calculation of the state variables $x(k)$ and $y(k)$ for $k=1,\ldots,K$, where $K$ is sufficiently large. The calculation of $x(k)$ includes sampling from a given probability distribution to fix an interarrival time for each $k$. The estimates $\widehat{\lambda}(k)=y(k)/k$ of the mean cycle time are evaluated for successive $k$ to observe the convergence of the estimates to the value of $\lambda$ specified by \eqref{E-lambda1}.  

In Figures~\ref{F-SRED25}--\ref{F-SRUD30}, we demonstrate the results of estimating the mean cycle time $\lambda$ for a BSCS model with the following parameters fixed: the swapping and charging time of one BP were set to $b=5$ and $c=100$ and the number of BPs available at the station to $m=4$. As an interarrival time distribution for EVs, the exponential and uniform distributions with means $a=25$ and $a=30$ were considered. The number of EVs was set to $K=200$. 

Figures~\ref{F-SRED25} and \ref{F-SRED30} show the results of evaluating the estimators $\widehat{\lambda}(k)=y(k)/k$ for $k$ from $1$ to $200$ step $5$ 
by the simulation of the system with the exponential interarrival time with mean $a=25$ and $a=30$, respectively. The simulation results for the same BSCS with the interarrival time uniformly distributed over $[5,45]$ (with mean $a=25$) and $[10,50]$ (with mean $a=30$) are given in Figures~\ref{F-SRUD25} and \ref{F-SRUD30}.

\begin{figure}[ht]
\begin{center}
\begin{tikzpicture}

\begin{axis}[legend pos=south east,
width=14cm,
height=9cm,
grid=major,
ymin=20,
ymax=50,
xmin=1,
xmax=200
]

\addplot[
mark=*
]
coordinates {
(5, 42.0000)
(10, 32.0000)
(15, 28.6667)
(20, 27.0000)
(25, 29.4000)
(30, 28.1667)
(35, 27.2857)
(40, 26.6250)
(45, 28.0000)
(50, 27.4000)
(55, 26.9091)
(60, 26.5000)
(65, 27.4615)
(70, 27.0714)
(75, 26.7333)
(80, 26.4375)
(85, 27.1765)
(90, 26.8889)
(95, 26.6316)
(100, 26.4000)
(105, 27.0000)
(110, 26.7727)
(115, 26.5652)
(120, 26.3750)
(125, 26.8800)
(130, 26.6923)
(135, 26.5185)
(140, 26.3571)
(145, 26.7931)
(150, 26.6333)
(155, 26.4839)
(160, 26.3438)
(165, 26.7273)
(170, 26.5882)
(175, 26.4571)
(180, 26.3333)
(185, 26.6757)
(190, 26.5526)
(195, 26.4359)
(200, 26.3250)
};

\addlegendentry{$\widehat{\lambda}(k)$}

\addplot[
blue,
line width=1.75pt,
domain=0:200,
y domain=20:50
]
coordinates {
(0,26.25)
(200,26.25)
};

\addlegendentry{$\lambda=26.25$}

%
%
%
%
\node[style={fill=white}] at (axis cs: 80,45) {$a=25$, $b=5$, $c=100$, $m=4$};
\node[style={fill=white}] at (axis cs: 80,40) {$\lambda=\max(a,b,(b+c)/m)=26.25$};

\end{axis}

\end{tikzpicture}
\caption{Simulation results for the exponential distribution with expected value of $25$.} 
\label{F-SRED25}
\end{center}
\end{figure}
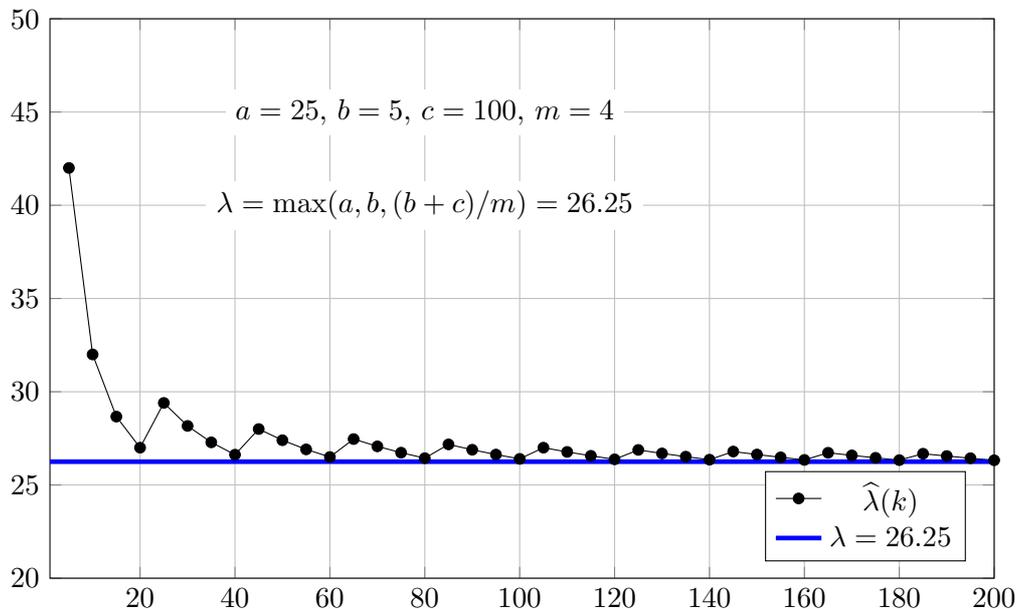

\begin{figure}[ht]
\begin{center}
\begin{tikzpicture}

\begin{axis}[legend pos=south east,
width=14cm,
height=9cm,
grid=major,
ymin=20,
ymax=50,
xmin=1,
xmax=200
]

\addplot[
mark=*
]
coordinates {
(5, 42.0000)
(10, 32.0000)
(15, 28.6667)
(20, 27.0000)
(25, 29.4000)
(30, 28.1667)
(35, 27.4067)
(40, 27.8256)
(45, 28.0000)
(50, 27.5329)
(55, 27.1018)
(60, 27.3004)
(65, 27.4615)
(70, 27.1663)
(75, 26.8747)
(80, 27.0378)
(85, 27.1765)
(90, 26.9627)
(95, 26.7432)
(100, 27.5615)
(105, 27.8639)
(110, 27.6903)
(115, 27.9591)
(120, 27.8387)
(125, 28.6710)
(130, 28.7452)
(135, 28.9914)
(140, 29.1334)
(145, 28.8874)
(150, 29.6298)
(155, 29.5928)
(160, 29.8173)
(165, 29.7910)
(170, 29.8898)
(175, 30.7468)
(180, 30.7724)
(185, 30.5783)
(190, 30.5035)
(195, 30.3015)
(200, 30.0320)
};

\addlegendentry{$\widehat{\lambda}(k)$}

\addplot[
blue,
line width=1.75pt,
domain=0:200,
y domain=20:50
]
coordinates {
(0,30)
(200,30)
};

\addlegendentry{$\lambda=30$}

%
%
%
%
\node[style={fill=white}] at (axis cs: 80,45) {$a=30$, $b=5$, $c=100$, $m=4$};
\node[style={fill=white}] at (axis cs: 80,40) {$\lambda=\max(a,b,(b+c)/m)=30$};

\end{axis}

\end{tikzpicture}
\caption{Simulation results for the exponential distribution with expected value of $30$.}
\label{F-SRED30}
\end{center}
\end{figure}

\begin{figure}[ht]
\begin{center}
\begin{tikzpicture}

\begin{axis}[legend pos=south east,
width=14cm,
height=9cm,
grid=major,
ymin=20,
ymax=50,
xmin=1,
xmax=200
]

\addplot[
mark=*
]
coordinates {
(5, 42.0000)
(10, 32.0000)
(15, 28.6667)
(20, 27.4297)
(25, 29.4000)
(30, 28.1667)
(35, 27.2857)
(40, 26.8398)
(45, 28.0000)
(50, 27.4000)
(55, 26.9091)
(60, 26.6432)
(65, 27.4615)
(70, 27.0714)
(75, 26.7333)
(80, 26.5449)
(85, 27.1765)
(90, 26.8889)
(95, 26.6316)
(100, 26.4859)
(105, 27.0000)
(110, 26.7727)
(115, 26.5652)
(120, 26.4466)
(125, 26.8800)
(130, 26.6923)
(135, 26.5185)
(140, 26.4185)
(145, 26.7931)
(150, 26.6333)
(155, 26.4839)
(160, 26.3975)
(165, 26.7273)
(170, 26.5882)
(175, 26.4571)
(180, 26.3811)
(185, 26.6757)
(190, 26.5526)
(195, 26.4359)
(200, 26.3680)
};

\addlegendentry{$\widehat{\lambda}(k)$}

\addplot[
blue,
line width=1.75pt,
domain=0:200,
y domain=20:50
]
coordinates {
(0,26.25)
(200,26.25)
};

\addlegendentry{$\lambda=26.25$}

%
%
%
%
\node[style={fill=white}] at (axis cs: 80,45) {$a=25$, $b=5$, $c=100$, $m=4$};
\node[style={fill=white}] at (axis cs: 80,40) {$\lambda=\max(a,b,(b+c)/m)=26.25$};

\end{axis}

\end{tikzpicture}
\caption{{Simulation results} 
 for the uniform distribution over $[5,45]$.}
\label{F-SRUD25}
\end{center}
\end{figure}

We observed that the simulation results presented demonstrate little difference between the estimates for both the exponential and uniform distributions with the same mean $a=25$. It can be explained by the high arrival rate of EVs, which makes, in this case, the BP recharging and swapping process a dominated time factor that diminishes the influence of random fluctuations in the interarrival time.

\begin{figure}[ht]
\begin{center}
\begin{tikzpicture}

\begin{axis}[legend pos=south east,
width=14cm,
height=9cm,
grid=major,
ymin=20,
ymax=50,
xmin=1,
xmax=200
]

\addplot[
mark=*
]
coordinates {
(5, 42.0000)
(10, 35.3017)
(15, 33.7061)
(20, 31.8570)
(25, 30.9129)
(30, 29.9412)
(35, 29.8494)
(40, 29.8937)
(45, 29.7359)
(50, 29.2479)
(55, 28.9830)
(60, 28.9241)
(65, 28.6633)
(70, 28.5405)
(75, 28.9555)
(80, 29.1730)
(85, 29.6788)
(90, 29.7295)
(95, 29.5454)
(100, 29.2346)
(105, 29.2286)
(110, 29.2851)
(115, 29.5788)
(120, 30.2022)
(125, 30.0836)
(130, 30.1020)
(135, 30.2152)
(140, 30.1501)
(145, 30.0677)
(150, 30.1574)
(155, 30.2697)
(160, 30.1730)
(165, 30.0028)
(170, 30.0331)
(175, 29.9851)
(180, 29.9798)
(185, 29.9593)
(190, 30.0432)
(195, 29.9563)
(200, 29.9339)
};

\addlegendentry{$\widehat{\lambda}(k)$}

\addplot[
blue,
line width=1.75pt,
domain=0:200,
y domain=20:50
]
coordinates {
(0,30)
(200,30)
};

\addlegendentry{$\lambda=30$}

%
%
%
%
\node[style={fill=white}] at (axis cs: 80,45) {$a=30$, $b=5$, $c=100$, $m=4$};
\node[style={fill=white}] at (axis cs: 80,40) {$\lambda=\max(a,b,(b+c)/m)=30$};

\end{axis}

\end{tikzpicture}
\caption{Simulation results for the uniform distribution over $[10,50]$.}
\label{F-SRUD30}
\end{center}
\end{figure}

\section{Example of Application Problem}
\label{S-EAP}

In this section, we offer an example of the application of the obtained results to solve real-world problems. Consider a network that consists of $N$ BSCSs. For each station $i=1,\ldots,N$, let $a_{i}$ be the mean interarrival time of EVs. We denote the swapping time and the charging time of one BP by $b_{i}$ and $c_{i}$, respectively.

Assume that the BSCS is equipped with $m_{i}$ BPs intended for swapping, and examine the mean cycle time for station $i$, which is given by 
\begin{equation*}
\lambda_{i}
=
\max(a_{i},b_{i},(b_{i}+c_{i})/m_{i}).
\end{equation*}

The mean swapping rate at the station is evaluated as $1/\lambda_{i}$, whereas the mean number of BPs swapped for a large time horizon $T$ is $T/\lambda_{i}$. 

Let us suppose that one swapping at station $i$ generates an income $r_{i}$. Then, the mean total income during time $T$ is equal to
\begin{equation*}
\frac{r_{i}T}{\max(a_{i},b_{i},(b_{i}+c_{i})/m_{i})}.
\end{equation*}

We represent the mean total income as a function of the number $m$ of BPs at the station in the form
\begin{equation*}
R_{i}(m)
=
\frac{r_{i}T}{\max(a_{i},b_{i},(b_{i}+c_{i})/m)}
=
\begin{cases}
\displaystyle{\frac{r_{i}T}{b_{i}+c_{i}}m}, & \text{if $\displaystyle{0\leq m\leq \frac{b_{i}+c_{i}}{\max(a_{i},b_{i})}}$};
\\
\displaystyle{\frac{r_{i}T}{\max(a_{i},b_{i})}}, & \text{if $\displaystyle{m>\frac{b_{i}+c_{i}}{\max(a_{i},b_{i})}}$}.
\end{cases}
\end{equation*}

It follows from the representation that the function $R_{i}(m)$ increases until $m$ becomes greater than a threshold value $(b_{i}+c_{i})/\max(a_{i},b_{i})$, and remains unchanged with further increase of $m$. 
The maximum mean total income and corresponding optimal number of BPs are defined as
\begin{equation*}
R_{i}(m^{\ast})
=
\frac{r_{i}T}{b_{i}+c_{i}}
\left[
\frac{b_{i}+c_{i}}{\max(a_{i},b_{i})}
\right],
\qquad
m^{\ast}
=
\left[
\frac{b_{i}+c_{i}}{\max(a_{i},b_{i})}
\right],
\end{equation*}
where $[x]$ denotes the integer part of $x$.

Suppose there are $M$ BPs, which we need to distribute between the BSCSs in the network so as to minimize (maximize) an appropriate optimality criterion. If the purpose is to maximize the mean total income generated by the network, the problem is formulated to find the number $m_{i}$ of BPs for each station $i$ to attain the maximum:
\begin{equation*}
\begin{aligned}
\max_{m_{1},\ldots,m_{N}>0}
&&&
\sum_{i=1}^{N}\frac{r_{i}}{\max(a_{i},b_{i},(b_{i}+c_{i})/m_{i})};
\\
\text{s.t.}
&&&
m_{1}+\cdots+m_{N}
=
M.
\end{aligned}
\end{equation*}
 
As a reasonable approximate solution technique, we can define the optimal numbers $m_{i}$ to be proportional to $w_{i}=r_{i}/(b_{i}+c_{i})$. With this technique, the number $m_{i}$ is first found for each $i=1,\ldots,N$ as the nearest positive integer:
\begin{equation*}
m_{i}
\approx
w_{i}/(w_{1}+\cdots+w_{N}).
\end{equation*}

Furthermore, we check whether the numbers $m_{i}$ are outside their threshold values or not. If, for each $i$, the inequality $m_{i}\leq(b_{i}+c_{i})/\max(a_{i},b_{i})$ holds, then the obtained numbers $m_{i}$ are taken as a solution to the problem. 

Suppose that $m_{i}>(b_{i}+c_{i})/\max(a_{i},b_{i})$ for some $i$. In this case, we decrement $m_{i}$ by one and increment some $m_{j}$ such that
\begin{equation*}
j
=
\arg\max_{k\ne i}\{w_{k}|\ m_{k}<(b_{k}+c_{k})/\max(a_{k},b_{k})-1\}.
\end{equation*}

We continue to redistribute BPs between stations until all stations have the number of BPs within their threshold values.



\section{Conclusions}
\label{S-S}

In this paper, we propose a new approach to the analysis of BSCSs' operation, which combines queueing modeling with the application of the methods and results of tropical algebra. We started with the development of a queueing model in the form of a system of recurrence equations that determine the dynamics of a BSCS. We introduced a related performance measure in the form of the mean operation cycle time. Then, the model was represented in terms of max-plus algebra as a linear vector dynamic system with a random state transition matrix, whereas the performance measure became the Lyapunov exponent of the system. We applied the methods and techniques of tropical algebra together with the results on the convergence of the expected value of the maximums of random variables to find the Lyapunov exponent as a limit of the expected value of the matrix norms. After the calculation of the Lyapunov exponent, we arrived at an explicit expression in terms of the expected values of the random variables and constants involved. We showed how this expression can be used to evaluate and optimize the performance of the BSCSs' operation.

We believe that the described research demonstrates the strong potential of the proposed approach to investigate various dynamic models that can be represented as stochastic linear dynamic systems in the tropical algebra setting. The results obtained indicated the ability of the approach to supplement and complement existing techniques of the modeling and optimization of BSCSs' operation. The approach offers a potential to provide explicit results that are given in terms of the expected values of the the RVs involved in the dynamic model and do not require a specific probability distribution. At the same time, the approach can be applied only to dynamic models that are linear in the tropical algebra sense such as queueing models, whose dynamics can be described in terms of the operations of the maximum and addition. This constitutes one of the limitations of the obtained solution, which may make it difficult to extend this approach to other classes of queueing models. Another limitation is that the approach focuses on evaluating the Lyapunov exponents of dynamic systems and can be hardly extended to other performance measures of interest. 

Possible directions of future research may concern further investigation of the obtained solution, including the sensitivity analysis of the model. An extension of the BSCS model to incorporate more-complicated operation patterns and accommodate additional constraints are of particular interest. As an example, one can consider a station where the number of simultaneously charged BPs is limited or the battery charging time is random. The formulation of new meaningful optimization problems to improve BSCSs' performance and the development of efficient solutions constitute another promising line of investigation.

\section*{Acknowledgments}
The authors are very grateful to the anonymous reviewers for their valuable comments and suggestions, which have been incorporated into the revised manuscript.

\bibliographystyle{abbrvurl}

\bibliography{Tropical_modelling_of_battery_swapping_and_charging_station}

\begin{thebibliography}{10}

\bibitem{Asadi2022Monotone}
A.~Asadi and S.~Nurre~Pinkley.
\newblock A monotone approximate dynamic programming approach for the
  stochastic scheduling, allocation, and inventory replenishment problem:
  Applications to drone and electric vehicle battery swap stations.
\newblock {\em Transp. Sci.}, 56(4):1085--1110, 2022.
\newblock \href {https://doi.org/10.1287/trsc.2021.1108}
  {\path{doi:10.1287/trsc.2021.1108}}.

\bibitem{Baccelli1993Synchronization}
F.~L. Baccelli, G.~Cohen, G.~J. Olsder, and J.-P. Quadrat.
\newblock {\em Synchronization and Linearity}.
\newblock Wiley Series in Probability and Statistics. Wiley, Chichester, 1993.

\bibitem{Butkovic2010Maxlinear}
P.~Butkovi\v{c}.
\newblock {\em Max-linear Systems}.
\newblock Springer Monographs in Mathematics. Springer, London, 2010.
\newblock \href {https://doi.org/10.1007/978-1-84996-299-5}
  {\path{doi:10.1007/978-1-84996-299-5}}.

\bibitem{Chau2014Pure}
K.~T. Chau.
\newblock Pure electric vehicles.
\newblock In R.~Folkson, editor, {\em Alternative Fuels and Advanced Vehicle
  Technologies for Improved Environmental Performance}, pages 655--684.
  Woodhead Publishing, 2014.
\newblock \href {https://doi.org/10.1533/9780857097422.3.655}
  {\path{doi:10.1533/9780857097422.3.655}}.

\bibitem{Choi2020Analysis}
D.~I. Choi and D.-E. Lim.
\newblock Analysis of the state-dependent queueing model and its application to
  battery swapping and charging stations.
\newblock {\em Sustainability}, 12(16):2343, 2020.
\newblock \href {https://doi.org/10.3390/su12062343}
  {\path{doi:10.3390/su12062343}}.

\bibitem{Cui2023Analysis}
D.~Cui, Z.~Wang, P.~Liu, S.~Wang, D.~G. Dorrell, X.~Li, and W.~Zhan.
\newblock Operation optimization approaches of electric vehicle battery
  swapping and charging station: A literature review.
\newblock {\em Energy}, 263(16):126095, 2023.
\newblock \href {https://doi.org/10.1016/j.energy.2022.126095}
  {\path{doi:10.1016/j.energy.2022.126095}}.

\bibitem{Golan2003Semirings}
J.~S. Golan.
\newblock {\em Semirings and Affine Equations Over Them}, volume 556 of {\em
  Mathematics and Its Applications}.
\newblock Springer, Dordrecht, 2003.
\newblock \href {https://doi.org/10.1007/978-94-017-0383-3}
  {\path{doi:10.1007/978-94-017-0383-3}}.

\bibitem{Gondran2008Graphs}
M.~Gondran and M.~Minoux.
\newblock {\em Graphs, Dioids and Semirings}, volume~41 of {\em Operations
  Research/ Computer Science Interfaces}.
\newblock Springer, New York, NY, 2008.
\newblock \href {https://doi.org/10.1007/978-0-387-75450-5}
  {\path{doi:10.1007/978-0-387-75450-5}}.

\bibitem{Gumbel1954Maxima}
E.~J. Gumbel.
\newblock The maxima of the mean largest value and of the range.
\newblock {\em Annals Math. Statist.}, 25(1):76--84, 1954.

\bibitem{Hartley1954Universal}
H.~O. Hartley and H.~A. David.
\newblock Universal bounds for mean range and extreme observation.
\newblock {\em Annals Math. Statist.}, 25(1):85--99, 1954.

\bibitem{Heidergott2006Maxplus-linear}
B.~Heidergott.
\newblock {\em Max-Plus Linear Stochastic Systems and Perturbation Analysis}.
\newblock The International Series on Discrete Event Dynamic Systems. Springer,
  New York, NY, 2006.
\newblock \href {https://doi.org/10.1007/978-0-387-38995-0}
  {\path{doi:10.1007/978-0-387-38995-0}}.

\bibitem{Heidergott2006Maxplus}
B.~Heidergott, G.~J. Olsder, and J.~{van der Woude}.
\newblock {\em Max {P}lus at Work}.
\newblock Princeton series in applied mathematics. Princeton Univ. Press,
  Princeton, NJ, 2006.

\bibitem{Hemavathi2022Study}
S.~Hemavathi and A.~Shinisha.
\newblock A study on trends and developments in electric vehicle charging
  technologies.
\newblock {\em J. Energy Storage}, 52:105013, 2022.
\newblock \href {https://doi.org/10.1016/j.est.2022.105013}
  {\path{doi:10.1016/j.est.2022.105013}}.

\bibitem{Huang2017Multi}
Q.~Huang, Q.-S. Jia, and X.~Guan.
\newblock A multi-timescale and bilevel coordination approach for matching
  uncertain wind supply with {EV} charging demand.
\newblock {\em IEEE Trans Autom. Sci. Eng.}, 14(2):694--704, 2017.
\newblock \href {https://doi.org/10.1109/TASE.2016.2585180}
  {\path{doi:10.1109/TASE.2016.2585180}}.

\bibitem{Ji2018Plugin}
Z.~Ji and X.~Huang.
\newblock Plug-in electric vehicle charging infrastructure deployment of china
  towards 2020: Policies, methodologies, and challenges.
\newblock {\em Renew. Sustain. Energy Rev.}, 90:710--727, 2018.
\newblock \href {https://doi.org/10.1016/j.rser.2018.04.011}
  {\path{doi:10.1016/j.rser.2018.04.011}}.

\bibitem{Kang2016Centralized}
Q.~Kang, J.~Wang, M.~Zhou, and A.~C. Ammari.
\newblock Centralized charging strategy and scheduling algorithm for electric
  vehicles under a battery swapping scenario.
\newblock {\em IEEE Trans. Intell. Transp. Syst.}, 17(3):659--669, 2016.
\newblock \href {https://doi.org/10.1109/TITS.2015.2487323}
  {\path{doi:10.1109/TITS.2015.2487323}}.

\bibitem{Kingman73Subadditive}
J.~F.~C. Kingman.
\newblock Subadditive ergodic theory.
\newblock {\em Ann. Probab}, 1(6):883--899, 1973.

\bibitem{Kolokoltsov1997Idempotent}
V.~N. Kolokoltsov and V.~P. Maslov.
\newblock {\em Idempotent Analysis and Its Applications}, volume 401 of {\em
  Mathematics and Its Applications}.
\newblock Springer, Dordrecht, 1997.
\newblock \href {https://doi.org/10.1007/978-94-015-8901-7}
  {\path{doi:10.1007/978-94-015-8901-7}}.

\bibitem{Krivulin2006Solution}
N.~K. Krivulin.
\newblock Solution of generalized linear vector equations in idempotent
  algebra.
\newblock {\em Vestnik St. Petersburg Univ. Math.}, 39(1):16--26, 2006.

\bibitem{Krivulin2009Evaluation}
N.~K. Krivulin.
\newblock Evaluation of lyapunov exponent in generalized linear dynamical
  models of queueing networks.
\newblock In I.~Troch and F.~Breitenecker, editors, {\em Proceedings MATHMOD
  2009 Vienna. Full Papers CD Volume}, pages 706--717. ARGESIM, Vienna, 2009.
\newblock \href {https://arxiv.org/abs/1212.6069} {\path{arXiv:1212.6069}}.

\bibitem{Krivulin2009Methods}
N.~K. Krivulin.
\newblock {\em Methods of Idempotent Algebra for Problems in Modeling and
  Analysis of Complex Systems}.
\newblock Saint Petersburg Univ. Press, St. Petersburg, 2009.
\newblock (in Russian).
\newblock URL: \url{http://www.google.ru/books?id=PDQP7kIGrhMC}.

\bibitem{Krivulin2001Evaluation}
N.~K. Krivulin and N.~B. Nevzorov.
\newblock On evaluation of the mean service cycle time in tandem queuing
  systems.
\newblock In M.~Ahsanullah, J.~Kennyon, and S.~Sarkar, editors, {\em Applied
  Statistical Science V}, pages 145--155. Nova Sci. Publ., Huntington, NY, Oct
  2001.
\newblock \href {https://arxiv.org/abs/1212.5309} {\path{arXiv:1212.5309}}.

\bibitem{Krivulin2007Convergence}
N.~K. Krivulin and I.~V. Romanovskii.
\newblock On the convergence of matrix powers of a generalized linear operator
  in idempotent algebra.
\newblock {\em J. Math. Sci.}, 142(1):1806--1816, 2007.
\newblock Transl. from the Russian original publ. in {\em Problemy
  Matematicheskogo Analiza}, 34:69-77, Dec 2006.
\newblock \href {https://doi.org/10.1007/s10958-007-0089-2}
  {\path{doi:10.1007/s10958-007-0089-2}}.

\bibitem{Lebrouhi2021Key}
B.~E. Lebrouhi, Y.~Khattari, B.~Lamrani, M.~Maaroufi, Y.~Zeraouli, and
  T.~Kousksou.
\newblock Key challenges for a large-scale development of battery electric
  vehicles: A comprehensive review.
\newblock {\em J. Energy Storage}, 44:103273, 2021.
\newblock \href {https://doi.org/10.1016/j.est.2021.103273}
  {\path{doi:10.1016/j.est.2021.103273}}.

\bibitem{Li2022Battery}
C.~Li, N.~Wang, W.~Li, Q.~Yi, and D.~Qi.
\newblock A battery centralized scheduling strategy for battery swapping of
  electric vehicles.
\newblock {\em J. Energy Storage}, 51:104327, 2022.
\newblock \href {https://doi.org/10.1016/j.est.2022.104327}
  {\path{doi:10.1016/j.est.2022.104327}}.

\bibitem{Liu2019Distributed}
X.~Liu, T.~Zhao, S.~Yao, C.~B. Soh, and P.~Wang.
\newblock Distributed operation management of battery swapping-charging
  systems.
\newblock {\em IEEE Trans. Smart Grid}, 10(5):5320--5333, 2019.
\newblock \href {https://doi.org/10.1109/TSG.2018.2880449}
  {\path{doi:10.1109/TSG.2018.2880449}}.

\bibitem{Maghfiroh2023Location}
M.~Maghfiroh and C.~Kavirathna.
\newblock Location selection of battery swap station using fuzzy {MCDM} method:
  A case study in indonesia.
\newblock {\em Jurnal Teknik Industri}, 24(2):81--94, 2023.
\newblock \href {https://doi.org/10.22219/JTIUMM.Vol24.No2.81-94}
  {\path{doi:10.22219/JTIUMM.Vol24.No2.81-94}}.

\bibitem{Romanovskii1967Optimization}
I.~V. Romanovskii.
\newblock Optimization of stationary control of a discrete deterministic
  process.
\newblock {\em Cybernetics}, 3:52--62, 1967.
\newblock \href {https://doi.org/10.1007/BF01078754}
  {\path{doi:10.1007/BF01078754}}.

\bibitem{Sun2018Optimal}
B.~Sun, X.~Tan, and D.~H.~K. Tsang.
\newblock Optimal charging operation of battery swapping and charging stations
  with {QoS} guarantee.
\newblock {\em IEEE Trans. Smart Grid}, 9(5):4689--4701, 2018.
\newblock \href {https://doi.org/10.1109/TSG.2017.2666815}
  {\path{doi:10.1109/TSG.2017.2666815}}.

\bibitem{Vorobjev1963Extremal}
N.~N. Vorob{'}ev.
\newblock The extremal matrix algebra.
\newblock {\em Soviet Math. Dokl.}, 4(5):1220--1223, 1963.

\bibitem{Wang2014Centralized}
J.~Wang, Q.~Kang, H.~Tian, L.~Wang, and Q.~Wu.
\newblock Centralized charging strategies of plug-in electric vehicles on spot
  pricing based on a hybrid {PSO}.
\newblock In Y.~Tan, Y.~Shi, and C.~A.~C. Coello, editors, {\em Advances in
  Swarm Intelligence}, volume 8795 of {\em Lecture Notes in Computer Science},
  pages 401--411. Springer, Cham, 2014.

\bibitem{Wang2019Qos}
L.~Wang and M.~Pedram.
\newblock {QoS} guaranteed online management of battery swapping station under
  dynamic energy pricing.
\newblock {\em IET Cyper-Phys. Syst. Theory Appl.}, 4(3):259--264, 2019.
\newblock \href {https://doi.org/10.1049/iet-cps.2018.5041}
  {\path{doi:10.1049/iet-cps.2018.5041}}.

\bibitem{Xing2019Charging}
Q.~Xing, Z.~Chen, Z.~Zhang, X.~Huang, Z.~Leng, K.~Sun, Y.~Chen, and H.~Wang.
\newblock Charging demand forecasting model for electric vehicles based on
  online ride-hailing trip data.
\newblock {\em IEEE Access}, 7:137390--137409, 2019.
\newblock \href {https://doi.org/10.1109/ACCESS.2019.2940597}
  {\path{doi:10.1109/ACCESS.2019.2940597}}.

\bibitem{Yang2019Optimal}
J.~Yang, W.~Wang, K.~Ma, and B.~Yang.
\newblock Optimal dispatching strategy for shared battery station of electric
  vehicle by divisional battery control.
\newblock {\em IEEE Access}, 7:38224--38235, 2019.
\newblock \href {https://doi.org/10.1109/ACCESS.2019.2906488}
  {\path{doi:10.1109/ACCESS.2019.2906488}}.

\bibitem{Yang2019Decentralized}
Y.~Yang, Q.-S. Jia, X.~Guan, X.~Zhang, Z.~Qiu, and G.~Deconinck.
\newblock Decentralized {EV}-based charging optimization with building
  integrated wind energy.
\newblock {\em IEEE Trans Autom. Sci. Eng.}, 16(3):1002--1017, 2019.
\newblock \href {https://doi.org/10.1109/TASE.2018.2856908}
  {\path{doi:10.1109/TASE.2018.2856908}}.

\bibitem{You2020Scheduling}
P.~You.
\newblock Scheduling of ev battery swapping in microgrids.
\newblock In W.~Meng, X.~Wang, and S.~Liu, editors, {\em Distributed Control
  Methods and Cyber Security Issues in Microgrids}, pages 203--241. Academic
  Press, 2020.
\newblock \href {https://doi.org/10.1016/B978-0-12-816946-9.00008-6}
  {\path{doi:10.1016/B978-0-12-816946-9.00008-6}}.

\bibitem{Zhang2019Demand}
L.~Zhang, S.~Zhou, J.~An, and Q.~Kang.
\newblock Demand-side management optimization in electric vehicles battery
  swapping service.
\newblock {\em IEEE Access}, 7:95224--95232, 2019.
\newblock \href {https://doi.org/10.1109/ACCESS.2019.2928312}
  {\path{doi:10.1109/ACCESS.2019.2928312}}.

\end{thebibliography}

\end{document}